\documentclass[9pt]{article}
\usepackage{mathrsfs}
\usepackage{amsthm}
\usepackage{amssymb}
\usepackage{amsmath}
\usepackage{graphicx}
\usepackage{color}
\usepackage{amsfonts}
\usepackage{float}
\usepackage{cite}
\usepackage [latin1]{inputenc}
\usepackage[text={140mm,210mm},left=35mm,vmarginratio=1:1]{geometry}
\newtheorem{theorem}{Theorem}[section]

\newtheorem{lemma}[theorem]{Lemma}

\numberwithin{equation}{section}
\normalsize

\begin{document}
\title{\textbf{Inhomogeneous central limit theorems for the voter model occupation times}}

\author{Xiaofeng Xue \thanks{\textbf{E-mail}: xfxue@bjtu.edu.cn \textbf{Address}: School of Mathematics and Statistics, Beijing Jiaotong University, Beijing 100044, China.}\\ Beijing Jiaotong University}

\date{}
\maketitle

\noindent {\bf Abstract:} In this paper, we extend the functional central limit theorems for the occupation times of the voter models on lattices given in \cite{Xue2026} to the case where the initial distribution is a spatially inhomogeneous product measure. The duality relationship between the voter model and the coalescing random walk and the Donsker's invariance principle of the simple random walk play the key roles in the proofs of our main results.  

\quad

\noindent {\bf Keywords:} voter model, occupation time, central limit theorem, coalescing random walk, spatial inhomogeneity.

\noindent {\bf MSC2020:} 60F05, 60K35.

\section{Introduction}\label{section one}
This paper is a further investigation of the topic discussed in \cite{Xue2026}. We extend the functional central limit theorems of the voter model occupation times on lattices to the case where the initial distribution is a product measure without spatial homogeneity. For later use, we first introduce some notations. For each integer $d\geq 1$, we denote by $\mathbb{Z}^d$ the $d$-dimensional lattice. For any $x=(x_1, x_2, \ldots, x_d)\in \mathbb{Z}^d$ and $y=(y_1, y_2, \ldots, y_d)\in \mathbb{Z}^d$, we write $x\sim y$ when and only when
\[
\sum_{i=1}^d|x_i-y_i|=1,
\]
i.e., $y$ is one of $2d$ neighbors of $x$. We denote by $\mathbf{0}$ the origin of $\mathbb{Z}^d$, i.e., $\mathbf{0}=(0, 0, \ldots, 0)$. Now we recall the definition of the voter model on $\mathbb{Z}^d$. In the voter model, each $x\in \mathbb{Z}^d$ is a supporter of  one of two opposite opinions, namely `$1$' and `$0$', of a topic. For any $x\in \mathbb{Z}^d$ and $t\geq 0$, let $\eta_t(x)$ be the opinion of $x$ at moment $t$, then
\[
\eta_t=\{\eta_t(x)\}_{x\in \mathbb{Z}^d}
\]
is a configuration in $\{0, 1\}^{\mathbb{Z}^d}$. The voter model evolves as follows. For any $x\in \mathbb{Z}^d$ and $y\sim x$, $x$ adopts the opinion of $y$ at rate $1$. Consequently, the voter model $\{\eta_t\}_{t\geq 0}$ on $\mathbb{Z}^d$ is a continuous-time Markov process with generator $\mathcal{L}$ given by
\begin{equation}\label{equ 1.1 generator}
\mathcal{L}f(\eta)=\sum_{x\in \mathbb{Z}^d}\sum_{y: y\sim x}\left(f(\eta^{x, y})-f(\eta)\right)
\end{equation}
for any $\eta\in \{0, 1\}^{\mathbb{Z}^d}$ and local $f: \{0, 1\}^{\mathbb{Z}^d}\rightarrow \mathbb{R}$, where $\eta^{x, y}\in \{0, 1\}^{\mathbb{Z}^d}$ is defined as
\[
\eta^{x, y}(z)=
\begin{cases}
\eta(y) & \text{~if~}z=x,\\
\eta(z) & \text{~if~}z\neq x
\end{cases}
\]
for all $z\in \mathbb{Z}^d$. Readers could resort to Chapter 5 of \cite{Lig1985} and Part {\rm \uppercase\expandafter{\romannumeral2}} of \cite{Lig1999} for a detailed review of the basic properties of the voter model.

For any $\eta\in\{0, 1\}^{\mathbb{Z}^d}$, we denote by $\mathbb{P}_\eta$ the probability measure of $\{\eta_t\}_{t\geq 0}$ starting from $\eta$. For any probability measure $\mu$ on $\{0, 1\}^{\mathbb{Z}^d}$, we denote by $\mathbb{P}_\mu$ the probability measure of $\{\eta_t\}_{t\geq 0}$ with initial distribution $\mu$, i.e.,
\[
\mathbb{P}_\mu(\cdot)=\int_{\{0, 1\}^{\mathbb{Z}^d}} \mathbb{P}_\eta(\cdot)\mu(d\eta).
\]
We denote by $\mathbb{E}_\mu$ (resp. $\mathbb{E}_\eta$) the expectation with respect to $\mathbb{P}_\mu$ (resp. $\mathbb{P}_\eta$).

Throughout this paper, we assume that $\rho$ is a uniformly continuous function on $\mathbb{R}^d$ such that $0<\rho(u)<1$ for any $u\in \mathbb{R}^d$. For any integer $N\geq 1$, we further denote by $\nu_{\rho, N}$ the product measure on $\{0, 1\}^{\mathbb{Z}^d}$ under which $\{\eta(x)\}_{x\in \mathbb{Z}^d}$ are independent and
\[
\nu_{\rho, N}\left(\eta(x)=1\right)=\rho\left(x/\sqrt{N}\right)
\]
for all $x\in \mathbb{Z}^d$. In this paper, we are concerned with the occupation times of the voter model with initial distribution $\nu_{\rho, N}$. In detail, when $\eta_0$ is distributed with $\nu_{\rho, N}$, the centered occupation time process $\{\xi_t^N\}_{t\geq 0}$ on $\mathbf{0}$ is defined as
\begin{equation}\label{equ 1.2 centered occupation time}
\xi_t^N=\int_0^t \left(\eta_s(\mathbf{0})-\mathbb{E}_{\nu_{\rho, N}}\eta_s(\mathbf{0})\right)ds
\end{equation}
for any $t\geq 0$. We write $\xi_t^N$ as $\xi_t^{N, d}$ when we need to emphasize the dependence of $\xi_t^N$ on the dimension $d$. In the homogeneous case where $\rho\equiv p\in (0, 1)$, it is shown in \cite{Cox1983} that, for any $d\geq 2$ and $t>0$, $\frac{1}{h_d(N)}\xi_{tN}^{N, d}$ converges weakly to a Gaussian distribution as $N\rightarrow+\infty$, where
\begin{equation}\label{equ 1.3 hdt}
h_d(t)=
\begin{cases}
\frac{t}{\sqrt{\log t}} & \text{~if~} d=2,\\
t^{3/4} & \text{~if~} d=3,\\
\sqrt{t\log t} & \text{~if~} d=4,\\
\sqrt{t} & \text{~if~} d\geq 5
\end{cases}
\end{equation}
for any $t\geq 0$. Reference \cite{Xue2026} extends the above CLT to a functional version when $d\geq 3$. It is shown in \cite{Xue2026} that, when $\rho\equiv p\in (0, 1)$, for any $d\geq 3$ and $T>0$,
\[
\left\{\frac{1}{h_d(N)}\xi_{tN}^{N, d}:~0\leq t\leq T\right\}
\]
converges weakly, with respect to the uniform topology of $C[0, T]$, to $\{V_t\}_{0\leq t\leq T}$ as $N\rightarrow+\infty$, where $V_\cdot$ is the Brownian motion times a constant when $d\geq 4$ or a Gaussian process without independent increments when $d=3$. In this paper, we extend the aforesaid functional CLT given in \cite{Xue2026} to the case where $\rho$ is more general than a constant. For the precise statements of our main results, see Section \ref{section two}.

Central limit theorems of the occupation times of the interacting particle systems starting from a spatially inhomogeneous product measure have been discussed for the simple exclusion process (SEP). In the SEP, each vertex is vacant or occupied by a particle. All these particles perform independent simple random walks except that any jump to an occupied vertex is suppressed. It is shown in \cite{Xu2025} that, when $d\geq 2$ and the initial distribution of the SEP on $\mathbb{Z}^d$ is $\nu_{\rho, N}$, the centered occupation time process of the SEP on $\mathbf{0}$ converges weakly, with respect to the uniform topology, to the It\^{o} integral of a function with respect to a standard Brownian motion under proper time-space scaling, where the aforesaid function is related to the transition probabilities of the simple random walk. The main result of this paper in the case $d\geq 4$ is an analogue of the above conclusion given in \cite{Xu2025}. For mathematical details, see Section \ref{section two}.

\section{Main results}\label{section two}
In this section, we give the main results of this paper. We first introduce some notations for later use. We denote by $\{X_t\}_{t\geq 0}$ the continuous-time simple random walk on $\mathbb{Z}^d$ with generator $\mathcal{G}$ defined as
\[
\mathcal{G}g(x)=\sum_{y: y\sim x}\left(g(y)-g(x)\right)
\]
for any $x\in \mathbb{Z}^d$ and bounded $g: \mathbb{Z}^d\rightarrow \mathbb{R}$. We denote by $\hat{\mathbb{P}}$ the probability measure of $\{X_t\}_{t\geq 0}$ and by $\hat{\mathbb{E}}$ the expectation with respect to $\hat{\mathbb{P}}$.

We denote by $\{p_t\}_{t\geq 0}$ the transition probabilities of $\{X_t\}_{t\geq 0}$, i.e.,
\[
p_t(x, y)=\hat{\mathbb{P}}\left(X_t=y\big|X_0=x\right)
\]
for any $x, y\in \mathbb{Z}^d$ and $t\geq 0$. We denote by $\gamma$ the term
\[
\hat{\mathbb{P}}\left(X_t\neq \mathbf{0}\text{~for all~}t\geq 0\big|X_0\sim \mathbf{0}\right).
\]
When we need to emphasize the dependence of $p_t, \gamma$ on $d$, we write $p_t, \gamma$ as $p_{t, d}, \gamma_d$ respectively.

We denote by $\{B_t\}_{t\geq 0}$ the real-valued standard Brownian motion starting from $0$. For each $d\geq 2$, we denote by $\{\mathcal{W}_t^d\}_{t\geq 0}$ the $\mathbb{R}^d$-valued standard Brownian motion starting from $\mathbf{0}$, i.e.,
\[
\mathcal{W}_t^d=\left(B_t^1, B_t^2, \ldots, B_t^d\right),
\]
where $\{B_t^1\}_{t\geq 0}, \ldots, \{B_t^d\}_{t\geq 0}$ are independent copies of $\{B_t\}_{t\geq 0}$. For any $t\geq 0$ and $u\in \mathbb{R}^d$, we define
\[
\varrho(t, u)=\mathbb{E}\rho\left(\mathcal{W}_{2t}^d+u\right),
\]
where $\rho$ is defined as in Section \ref{section one}. According to the Kolmogorov-Chapman equation, $\varrho$ satisfies that
\[
\begin{cases}
&\frac{d}{dt}\varrho(t, u)=\Delta \varrho(t, u),\\
&\varrho(0, \cdot)=\rho(\cdot),
\end{cases}
\]
i.e., $\varrho$ is a solution to the heat equation.

Now we give our main result in the case $d\geq 4$.
\begin{theorem}\label{theorem 2.1 main d geq 4}
Let $d\geq 4, T>0$, $\rho$ be defined as in Section \ref{section one} and $h_d$ be defined as in \eqref{equ 1.3 hdt}. If $\eta_0$ is distributed with $\nu_{\rho, N}$, then
$
\left\{\frac{1}{h_d(N)}\xi_{tN}^{N, d}:~0\leq t\leq T\right\}
$
converges weakly, with respect to the uniform topology of $C[0, T]$, to
\[
\left\{\int_0^tA_{s, d}dB_s:~0\leq t\leq T\right\}
\]
as $N\rightarrow+\infty$, where $\xi_t^{N, d}$ is defined as in \eqref{equ 1.2 centered occupation time} and
\[
A_{s, d}=
\begin{cases}
\sqrt{\frac{\gamma_4\varrho(s, \mathbf{0})\left(1-\varrho(s, \mathbf{0})\right)}{\pi^2}} & \text{~if~}d=4,\\
\sqrt{4d\gamma_d\left(\int_0^{+\infty}\theta p_{\theta, d}\left(\mathbf{0}, \mathbf{0}\right)d\theta\right)\varrho(s, \mathbf{0})\left(1-\varrho(s, \mathbf{0})\right)} & \text{~if~}d\geq 5.
\end{cases}
\]
\end{theorem}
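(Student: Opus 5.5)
We prove Theorem \ref{theorem 2.1 main d geq 4} by the standard two-step scheme: convergence of finite-dimensional distributions plus tightness in $C[0,T]$. Since the limit $\int_0^t A_{s,d}\,dB_s$ is a centered Gaussian process with independent increments and covariance $\int_0^{s\wedge t}A_{r,d}^2\,dr$, the finite-dimensional part reduces to two tasks. First, we identify the limiting covariance of $\frac{1}{h_d(N)}\xi^{N,d}_{tN}$. Second, we show asymptotic Gaussianity, for instance via vanishing higher cumulants or a martingale decomposition.

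\textbf{Step 1 (covariance via duality).} We write $\xi^N_{tN}=\int_0^{tN}(\eta_s(\mathbf 0)-\mathbb E\eta_s(\mathbf 0))ds$ and use the duality with coalescing random walks. For $s<r$, $\mathrm{Cov}(\eta_s(\mathbf 0),\eta_r(\mathbf 0))$ is computed by running two coalescing walks backward from $(\mathbf 0,r)$ and $(\mathbf 0,s)$; one walk has a head start of length $r-s$. Under the product measure $\nu_{\rho,N}$, the covariance equals
\[
\mathbb E\Big[\mathbf 1_{\{\text{coalesce before time } 0\}}\,\rho(Z/\sqrt N)\big(1-\rho(Z/\sqrt N)\big)\Big],
\]
where $Z$ is the common terminal position. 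Non-coalesced pairs contribute zero by independence. Since $\rho$ is uniformly continuous and $Z/\sqrt N\approx \mathcal W^d_{2s/N}$ in law by Donsker's invariance principle, the factor $\rho(1-\rho)$ is replaced, up to $o(1)$, by $\mathbb E[\rho(1-\rho)(\mathcal W_{2u})]$ with $u=s/N$. After a careful treatment of the coalescence (the two walks meet at time $O(1)$ relative to $N$, in the region where the covariance mass lives), this should produce $\varrho(u,\mathbf 0)(1-\varrho(u,\mathbf 0))$. The reason is that the relevant quantity is the covariance of the dual walks that have not yet coalesced with the rest. More precisely, one should compute $\mathrm{Var}$ using the "increment" form: $\mathbb E[\eta_s(\mathbf 0)\eta_r(\mathbf 0)]-\mathbb E\eta_s\,\mathbb E\eta_r$ equals $\mathbb P(\text{coalesce})\cdot$ an expression which, by Donsker, tends to $\varrho(u)-\varrho(u)^2$ with $\varrho(u)=\mathbb E\rho(\mathcal W_{2u})$.

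The time-lag structure then gives the homogeneous constants. The probability that walks separated by a lag $\theta$ eventually meet is $\asymp\gamma_d\int p_\theta$-type quantities. For $d\ge5$, integrating over the lag yields $2\cdot 2d\gamma_d\int_0^\infty\theta p_\theta(\mathbf 0,\mathbf 0)d\theta$ per unit time, which is integrable. For $d=4$ the integral diverges logarithmically, producing $\frac{\gamma_4}{\pi^2}N\log N$. These constants are exactly those from \cite{Xue2026}, so we reuse that computation with $p$ replaced by the slowly varying $\varrho(s/N,\mathbf 0)(1-\varrho(s/N,\mathbf 0))$. Because the correlation range in time is $o(N)$ (integrable for $d\ge5$, and for $d=4$ the log mass comes from lags $\ll N$), we obtain
\[
\mathrm{Cov}\Big(\tfrac{\xi_{tN}}{h_d(N)},\tfrac{\xi_{sN}}{h_d(N)}\Big)\to\int_0^{s\wedge t}A_{r,d}^2\,dr .
\]

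\textbf{Step 2 (Gaussianity and tightness).} For Gaussianity, we split $[0,tN]$ into blocks of length $N^{\alpha}$ separated by short gaps. The block sums are nearly independent, since the dual walks from distinct blocks rarely interact in $d\ge4$. Each block has a locally constant density $\varrho(kN^{\alpha-1},\mathbf 0)$, so within a block the homogeneous CLT of \cite{Cox1983,Xue2026} applies, and a Lindeberg argument finishes the proof. Alternatively, we bound the fourth cumulant via duality with four coalescing walks, exactly as in \cite{Xue2026}; the inhomogeneity only enters through the bounded factors $\rho\in(0,1)$, so the same estimates hold uniformly.

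For tightness, we show $\mathbb E|\xi_{tN}-\xi_{sN}|^4\le C h_d(N)^4|t-s|^{2}$, using the fourth-moment duality bounds of \cite{Xue2026}. Those bounds depend on $\rho$ only through $\sup\rho\le1$, and we then apply Kolmogorov's criterion.

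The main obstacle is Step 1 in $d=4$. Replacing $\rho(1-\rho)$ at the coalescence location by $\varrho(1-\varrho)$ at the origin at the macroscopic time requires uniform control of Donsker's approximation together with the logarithmic lag integral. We would first try truncating the lags to $[N^{\epsilon},N^{1-\epsilon}]$, where the log mass concentrates, and show that the remaining lags contribute $o(N\log N)$.
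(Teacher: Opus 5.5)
\textbf{Verdict: genuine gap.} Neither of your two routes to asymptotic normality in Step 2 works as stated.

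\textbf{The blocking argument fails already at the level of variances when $d=4$.} There $\mathrm{Cov}(\eta_s(\mathbf{0}),\eta_{s+\theta}(\mathbf{0}))\asymp 1/\theta$ for $1\ll\theta\ll s$. So the lags in a range $[N^{\beta},N^{\beta'}]$ carry a fraction $\beta'-\beta$ of $\mathrm{Var}(\xi^{N}_{tN})\asymp N\log N$. With blocks of length $N^{\alpha}$, $\alpha<1$, the block variances account for only a fraction $\alpha$ of the total. The remaining $1-\alpha$ sits in cross-block covariances, and short gaps do not change this. A Lindeberg argument for independent blocks therefore gives the wrong limit variance.

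\textbf{The blocks are also not nearly independent.} Dual walks from distinct blocks do not ``rarely interact'': for two adjacent blocks, the expected number of time pairs whose dual walks meet is of order $N^{\alpha}$ in $d=4$ and $N^{\alpha/2}$ in $d=5$. What is small is a normalized covariance. That is a second-moment fact and does not give asymptotic independence.

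\textbf{The cited CLTs do not apply inside a block.} Each block starts from the voter model at a macroscopic time, which is neither a product measure nor homogeneous, so \cite{Cox1983,Xue2026} cannot be invoked there.

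\textbf{The fallback is insufficient.} One vanishing cumulant does not imply Gaussianity. You would need all cumulants of order at least $3$ to vanish, i.e.\ a Cox--Griffeath type method of moments with $k$ coalescing walks and inhomogeneous $\rho$. The fourth-moment estimate of \cite{Xue2026} is a tightness bound of order $(t-s)^2$, not a statement that the fourth cumulant vanishes.

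\textbf{Missing idea: the paper's martingale decomposition.} It makes Gaussianity and independent increments automatic. Take $g_N(x)=\int_0^{\infty}e^{-s/N}p_s(\mathbf{0},x)ds$ and $G_N(t,\eta)=\sum_x(\eta(x)-\mathbb{E}_{\nu_{\rho,N}}\eta_t(x))g_N(x)$. Identity \eqref{equ 3.1} writes $\xi^N_{tN}$ as a martingale $M^N_{tN}$ plus boundary terms that vanish after normalization, thanks to $\mathrm{Cov}_{\nu_{\rho,N}}(\eta_s(x),\eta_s(y))\le\Phi(x-y)$. The martingale FCLT then only needs $h_d(N)^{-2}\langle M^N\rangle_{tN}\to\int_0^tA^2_{s,d}ds$ in probability. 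This reduces to two ingredients:
\begin{itemize}
\item the decorrelation estimate \eqref{equ 3.5}, unchanged from the homogeneous case;
\item the one-time nearest-neighbour limit $\mathbb{E}_{\nu_{\rho,N}}(\eta_{sN}(y)-\eta_{sN}(x))^2\to 2\gamma_d\varrho(s,\mathbf{0})(1-\varrho(s,\mathbf{0}))$, uniformly in $\|x\|_1\le K$. For $d=4$ the range is $\|x\|_1\le N^{1/2-\epsilon}$, combined with $\sum_{\|x\|_1>N^{1/2-\epsilon}}g_N^2(x)\le C\epsilon\log N+O(1)$.
\end{itemize}
This replaces your two-time lag analysis entirely.

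\textbf{Correction to Step 1.} The nearest-neighbour limit uses the mechanism your Step 1 is reaching for, but your first display is wrong for non-constant $\rho$. It would produce $\mathbb{E}[(\rho(1-\rho))(\mathcal{W}^d_{2u})]$ instead of $\varrho(u,\mathbf{0})(1-\varrho(u,\mathbf{0}))$. The product of means must be coupled with two walks that continue independently after the meeting time $\tau$. This gives $\tilde{\mathbb{E}}[\mathbf{1}_{\{\tau\le s\}}\mathcal{H}_2(X_\tau,s-\tau)]$ with $\mathcal{H}_2(z,r)=\tilde{\mathbb{E}}\rho(X^z_r/\sqrt{N})\,\tilde{\mathbb{E}}(1-\rho(X^z_r/\sqrt{N}))$, to which Donsker's principle applies. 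This is the strong Markov step in \eqref{equ 3.18}--\eqref{equ 3.19}.

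Your tightness step agrees with the paper.
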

Note that, according to the local central limit theorem of the simple random walk on $\mathbb{Z}^d$, $p_t(\mathbf{0}, \mathbf{0})=\Theta(t^{-d/2})$ for sufficiently large $t$ and hence $\int_0^{+\infty}\theta p_{\theta, d}\left(\mathbf{0}, \mathbf{0}\right)d\theta<+\infty$ when and only when $d\geq 5$. When $\rho\equiv p\in (0, 1)$, $A_{s, d}$ is a constant function and then Theorem \ref{theorem 2.1 main d geq 4} reduces to the main theorem of \cite{Xue2026} in the case $d\geq 4$.

To introduce the main result in the case $d=3$, we further define
\[
b_t(s, u)=\int_0^{t-s}\frac{1}{(4\pi r)^{\frac{3}{2}}}e^{-\frac{\|u\|_2^2}{4r}}dr
\]
for any $0\leq s\leq t$ and $u\in \mathbb{R}^3$, where
$
\|u\|_2^2=\sum_{i=1}^3u_i^2
$
for any $u=(u_1, u_2, u_3)\in \mathbb{R}^3$. Then, we denote by $\{\zeta_t\}_{t\geq 0}$ the Gaussian process with continuous sample path, mean zero and covariance functions given by
\[
{\rm Cov}\left(\zeta_{t_1}, \zeta_{t_2}\right)=\int_0^{t_1}\left(\int_{\mathbb{R}^3}b_{t_1}(s, u)b_{t_2}(s, u)\varrho(s, u)\left(1-\varrho(s, u)\right)du\right)ds
\]
for any $0\leq t_1\leq t_2$.

Now we give our main result in the case $d=3$.
\begin{theorem}\label{theorem 2.2 main d=3}
Let $d=3, T>0$, $\rho$ be defined as in Section \ref{section one}. If $\eta_0$ is distributed with $\nu_{\rho, N}$, then
$
\left\{\frac{1}{N^{\frac{3}{4}}}\xi_{tN}^{N, 3}:~0\leq t\leq T\right\}
$
converges weakly, with respect to the uniform topology of $C[0, T]$, to
$
\left\{\sqrt{12\gamma_3}\zeta_t:~0\leq t\leq T\right\}
$
as $N\rightarrow+\infty$, where $\xi_t^{N, 3}$ is defined as in \eqref{equ 1.2 centered occupation time}.
\end{theorem}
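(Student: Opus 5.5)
We prove Theorem \ref{theorem 2.2 main d=3} in two steps, convergence of finite-dimensional distributions and tightness in $C[0,T]$, using the duality with coalescing random walks. By duality, $\eta_s(\mathbf{0})=\eta_0(Y_s)$, where $Y$ is the backward walk from $\mathbf{0}$ in the graphical representation. Hence $\mathbb{E}_{\nu_{\rho,N}}\eta_s(\mathbf{0})=\sum_x p_s(\mathbf{0},x)\rho(x/\sqrt N)$.

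\textbf{Martingale decomposition.} Since the voter model is linear, for fixed $t$ the process
\[
M^{t}_r=\int_0^{t}\sum_x p_{(s-r)^+}(\mathbf{0},x)\,\eta_{r\wedge s}(x)\,ds
\]
is a martingale in $r\in[0,t]$. We therefore write
\[
\xi^N_{tN}=\sum_x\Big(\int_0^{tN}p_s(\mathbf{0},x)ds\Big)\big(\eta_0(x)-\rho(x/\sqrt N)\big)+\int_0^{tN}\! dM^{tN}_r .
\]
The first term is a sum of independent centred variables with weights $g_{tN}(x)=\int_0^{tN}p_s(\mathbf{0},x)ds$. The second term has predictable quadratic variation
\[
\int_0^{tN}\sum_{x}\sum_{y\sim x}\Big(\int_r^{tN}p_{s-r}(\mathbf{0},x)ds\Big)^2(\eta_r(x)-\eta_r(y))^2dr .
\]

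\textbf{Scaling of each piece.} We rescale with $x=\sqrt N u$ and $r=Nv$. The local CLT gives $g_{tN}(\sqrt N u)\approx N^{-1/2}\,2^{-3/2}\cdot 2\, b_t(0,u)$, up to the factor coming from the walk's variance $2$ per coordinate (this produces constants such as the $12$). The weight in the quadratic variation becomes $\approx N^{-1/2}b_t(v,u)$-type kernels. The sum over $x$ contributes $N^{3/2}$ and the time integral contributes $N$. Altogether the variance is of order $N^{3/2}\cdot N\cdot N^{-1}=N^{3/2}$, which matches the normalisation $N^{3/4}$.

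\textbf{Averaging of the density of discordant edges.} This is the crucial input. We need
\[
\frac{1}{N^{3/2}}\int_0^{tN}\sum_x f(x/\sqrt N, r/N)\sum_{y\sim x}(\eta_r(x)-\eta_r(y))^2dr\to \int\!\!\int f(u,v)\,2d\gamma_3\cdot 2\,\varrho(v,u)(1-\varrho(v,u))\,du\,dv
\]
in $L^2$. By duality, $\mathbb{P}(\eta_r(x)\neq\eta_r(y))$ equals the probability that two walks started at $x$ and $y$ have not met by time $r$, times the initial discordance at their positions. For large $r$, in $d=3$, this non-meeting probability tends to $\gamma_3$ (the walks escape with probability $\gamma$). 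The positions of the two walks are asymptotically independent and spread at scale $\sqrt r$. This yields the factor $\gamma_3\cdot 2\varrho(1-\varrho)$ with $\varrho$ evaluated at the heat-flowed profile; this is where uniform continuity of $\rho$ and Donsker's principle enter. The $L^2$ concentration follows from a four-walk coalescing duality estimate: the covariance between distant edges decays because walks started far apart rarely interact in $d=3$.

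The initial-data term contributes $\int b_t(0,u)b_{t'}(0,u)\rho(1-\rho)du$. That term appears to be of lower order $N^{1/2}$ relative to $N^{3/2}$, and I would check this carefully. If it is indeed negligible, the limiting covariance comes from the martingale part alone, and it gives $12\gamma_3\,{\rm Cov}(\zeta_{t_1},\zeta_{t_2})$.

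\textbf{Finite-dimensional distributions.} For these we apply a martingale CLT to linear combinations $\sum_i\lambda_i\xi^N_{t_iN}$. Each such combination is the terminal value of a single martingale in $r$ with summed kernels. Jumps are $O(N^{-1/2})$ after normalisation, and the quadratic variation converges by the averaging step.

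\textbf{Tightness.} We bound $\mathbb{E}|\xi^N_{tN}-\xi^N_{sN}|^4\le C N^3|t-s|^{2}$, or with exponent $>1$, via Burkholder's inequality on the martingale representation of increments. The required estimate is $\int_0^{(t-s)N}p_r(\mathbf{0},x)dr$-type bounds, and then Kolmogorov's criterion applies. Alternatively we compare with the homogeneous case of \cite{Xue2026}, since the moment bounds there depend only on walk estimates and $\rho$ bounded.

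\textbf{Main obstacle.} I expect the averaging step, the $L^2$ law of large numbers for discordant edges with the inhomogeneous profile, to be the hardest part. I would first prove the expectation limit via the two-walk duality and then control the variance by the four-walk coupling argument adapted from \cite{Xue2026}.
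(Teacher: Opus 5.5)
Your proposal is correct and follows essentially the same route as the paper. The shared steps are: the Doob-martingale decomposition with kernel $v(tN-r,x)=\int_0^{tN-r}p_s(\mathbf{0},x)ds$ (the paper's \eqref{equ 4.1}); negligibility of the initial-data term, whose variance is $O(N^{1/2})$ against $N^{3/2}$ (Lemma~\ref{lemma 4.1}); a martingale CLT whose quadratic variation is identified through the two-walk duality estimate $\mathbb{E}_{\nu_{\rho,N}}(\eta_{sN}(y)-\eta_{sN}(x))^2\approx 2\gamma_3\varrho(s,x/\sqrt N)(1-\varrho(s,x/\sqrt N))$ plus a decorrelation bound from \cite{Xue2026}; and tightness imported from the homogeneous moment bounds.

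One correction to your scaling paragraph: the local CLT gives $v(tN,\sqrt N u)\approx N^{-1/2}b_t(0,u)$ with no extra prefactor, because the walk's variance $2$ per coordinate is already built into the kernel $(4\pi r)^{-3/2}e^{-\|u\|_2^2/(4r)}$. The constant $12$ therefore arises as $2d\cdot 2\gamma_3$, exactly as in your averaging step, not from the walk variance. That averaging step should also carry the normalisation $N^{-5/2}$ rather than $N^{-3/2}$.
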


According to the fact that
\[
\int_{\mathbb{R}^3}\frac{1}{(4\pi r_1)^{\frac{3}{2}}}e^{-\frac{\|u\|_2^2}{4r_1}}\frac{1}{(4\pi r_2)^{\frac{3}{2}}}e^{-\frac{\|u\|_2^2}{4r_2}} du
=\frac{1}{(4\pi (r_1+r_2))^{\frac{3}{2}}},
\]
it is easy to check that Theorem \ref{theorem 2.2 main d=3} reduces to the main theorem of \cite{Xue2026} in the case $d=3$ when $\rho\equiv p\in (0, 1)$.

Proofs of Theorems \ref{theorem 2.1 main d geq 4} and \ref{theorem 2.2 main d=3} utilize a strategy similar to that given in \cite{Xue2026} to deal with the homogeneous case, where the martingale decomposition method introduced in \cite{Kipnis1987} and the Poisson flow method introduced in \cite{Birkner2007} are applied. The main new difficulty in the inhomogeneous case is to estimate the term $\mathbb{E}_{\nu_{\rho, N}}\left(\left(\eta_{sN}(x)-\eta_{sN}(y)\right)^2\right)$ for $x\sim y$ and large $N$. This term is generated from a quadratic variation computing in the Poisson flow method. In the homogeneous case where $\rho\equiv p\in (0, 1)$, $\mathbb{E}_{\nu_{\rho, N}}\left(\left(\eta_{sN}(x)-\eta_{sN}(y)\right)^2\right)$ does not depend on the choice of $x$ and it is easy to show that
\[
\lim_{N\rightarrow +\infty}\mathbb{E}\left(\left(\eta_{sN}(x)-\eta_{sN}(y)\right)^2\right)=2\gamma_dp(1-p)
\]
via the duality relationship between the voter model and the coalescing random walk. In the inhomogeneous case, via the Donsker's invariance principle for the simple random walk, we will show that
\[
\mathbb{E}_{\nu_{\rho, N}}\left(\left(\eta_{sN}(x)-\eta_{sN}(y)\right)^2\right)\approx 2\gamma_d\varrho(s, x/N)\left(1-\varrho(s, x/N)\right)
\]
for large $N$. For mathematical details, see Sections \ref{section three} and \ref{section four}.

\section{Proof of Theorem \ref{theorem 2.1 main d geq 4}}\label{section three}
In this section we prove Theorem \ref{theorem 2.1 main d geq 4}. As we have introduced in Section \ref{section two}, the strategy of our proof is similar to that given in \cite{Xue2026} to deal with the homogeneous case. So we only highlight the main new difficulties generated by the inhomogeneous assumption. We omit details which do not depend on the spatial inhomogeneity and are nearly the same as that in the homogeneous case.

\subsection{Proof of Theorem \ref{theorem 2.1 main d geq 4}: The case $d\geq 5$}\label{subsection 3.1}
In this subsection we prove Theorem \ref{theorem 2.1 main d geq 4} in the case $d\geq 5$. So throughout this subsection we assume that $d\geq 5$. For each $N\geq 1$ and $x\in \mathbb{Z}^d$, we define
\[
g_N(x)=\int_0^{+\infty}e^{-s/N}p_s(\mathbf{0}, x)ds.
\]
For each $N\geq 1, t\geq 0$ and $\eta\in \{0, 1\}^{\mathbb{Z}^d}$, we further define
\[
G_N(t, \eta)=\sum_{x\in \mathbb{Z}^d}\left(\eta(x)-\mathbb{E}_{\nu_{\rho, N}}\eta_t(\mathbf{x})\right)g_N(x)
\]
and
\[
M_t^N=G_N(t, \eta_t)-G_N(0, \eta_0)-\int_0^t\left(\partial_s+\mathcal{L}\right)G_N(s, \eta_s)ds,
\]
where $\mathcal{L}$ is defined as in \eqref{equ 1.1 generator}. Then $\{M_t^N\}_{t\geq 0}$ is a martingale according to the Dynkin's martingale formula. By Kolmogorov-Chapman equation,
\[
\partial_s \mathbb{E}_{\nu_{\rho, N}}\eta_s(x)=\sum_{y:y\sim x}\left(\mathbb{E}_{\nu_{\rho, N}}\eta_s(y)-\mathbb{E}_{\nu_{\rho, N}}\eta_s(x)\right).
\]
Then, according to the expression of $\mathcal{L}$ and the integration-by-parts formula, it is easy to check that
\begin{equation}\label{equ 3.1}
\left(\partial_s+\mathcal{L}\right)G_N(s, \eta_s)=\frac{1}{N}G_N(s, \eta_s)-\left(\eta_s(\mathbf{0})-\mathbf{E}_{\nu_{\rho, N}}\eta_s(\mathbf{0})\right).
\end{equation}

A proof of \eqref{equ 3.1} in the homogeneous case is given in \cite{Xue2026}, which is also applicable in the inhomogeneous case and hence here we omit a detailed check of \eqref{equ 3.1}. By \eqref{equ 3.1}, we have
\begin{equation}\label{equ 3.2}
\frac{1}{\sqrt{N}}\xi_{tN}^N=\frac{1}{\sqrt{N}}M_{tN}^N-\left(\frac{1}{\sqrt{N}}\left(G_N(tN, \eta_{tN})-G_N(0, \eta_0)\right)
-\frac{1}{N^{3/2}}\int_0^{tN}G_N(s, \eta_s)ds\right).
\end{equation}

The following lemma shows that the term
\[
\frac{1}{\sqrt{N}}\left(G_N(tN, \eta_{tN})-G_N(0, \eta_0)\right)
-\frac{1}{N^{3/2}}\int_0^{tN}G_N(s, \eta_s)ds
\]
in the decomposition \eqref{equ 3.2} converges weakly to $0$.
\begin{lemma}\label{lemma 3.1}
Let $\eta_0$ be distributed with $\nu_{\rho, N}$. For any $t\geq 0$,
\[
\lim_{N\rightarrow+\infty}\frac{1}{\sqrt{N}}\left(G_N(tN, \eta_{tN})-G_N(0, \eta_0)\right)
-\frac{1}{N^{3/2}}\int_0^{tN}G_N(s, \eta_s)ds=0
\]
in probability.
\end{lemma}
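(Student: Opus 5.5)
We will show convergence in $L^2(\mathbb{P}_{\nu_{\rho,N}})$, which gives convergence in probability. Since $G_N(s,\eta_s)$ is centered by construction, it suffices to prove
\[
\sup_{0\le s\le tN}\mathbb{E}_{\nu_{\rho,N}}\left(G_N(s,\eta_s)^2\right)=o(N).
\]
Granting this, the first term has second moment at most $\frac{2}{N}\left(\mathbb{E}G_N(tN,\eta_{tN})^2+\mathbb{E}G_N(0,\eta_0)^2\right)\to 0$. For the second term, Cauchy--Schwarz gives
\[
\mathbb{E}\Big(\frac{1}{N^{3/2}}\int_0^{tN}G_N\,ds\Big)^2\le \frac{tN}{N^3}\int_0^{tN}\mathbb{E}G_N(s,\eta_s)^2ds\le t^2\sup_{s\le tN}\frac{\mathbb{E}G_N^2}{N},
\]
which also tends to $0$.

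To bound $\mathbb{E}G_N(s,\eta_s)^2$ we write it as $\sum_{x,y}g_N(x)g_N(y)\,\mathrm{Cov}_{\nu_{\rho,N}}(\eta_s(x),\eta_s(y))$ and use duality with coalescing random walks. Let $X^x,X^y$ be the coalescing walks started at $x,y$ and run for time $s$, and let $\tau_{x,y}$ be their meeting time. Then $\eta_s(x)=\eta_0(X^x_s)$, and since $\nu_{\rho,N}$ is a product measure,
\[
\mathrm{Cov}(\eta_s(x),\eta_s(y))=\hat{\mathbb{E}}\left[\mathbf{1}_{\{\tau_{x,y}\le s\}}\left(\rho(X_s^x/\sqrt N)-\rho(X_s^x/\sqrt N)^2\right)\right]+\mathrm{Cov}_{\hat{\mathbb{P}}}\left(\rho(X_s^x/\sqrt N),\rho(X_s^y/\sqrt N)\right).
\]
The second summand is the covariance of functions of two coalescing walks. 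Coupling them with independent walks, which agree up to time $\tau_{x,y}$, bounds it in absolute value by $2\hat{\mathbb{P}}(\tau_{x,y}\le s)$, because for independent walks the covariance vanishes. This coupling is the one point where inhomogeneity enters; in the homogeneous case the second summand is simply zero. Hence
\[
|\mathrm{Cov}(\eta_s(x),\eta_s(y))|\le C\,\hat{\mathbb{P}}(\tau_{x,y}<\infty),
\]
uniformly in $s$ and $N$, and this is exactly the bound available in the homogeneous case. Since $X^x-X^y$ is a simple random walk at doubled rate until $\tau_{x,y}$, we have $\hat{\mathbb{P}}(\tau_{x,y}<\infty)\le C(1+|x-y|)^{2-d}$.

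It then remains to estimate
\[
\sum_{x,y}g_N(x)g_N(y)(1+|x-y|)^{2-d}.
\]
We use $g_N(x)\le g(x)\le C(1+|x|)^{2-d}$, where $g$ is the Green function, valid for $d\ge 3$. We also use that $g_N(x)$ is small beyond $|x|\gg\sqrt N$, since $g_N(x)\lesssim |x|^{2-d}e^{-c|x|/\sqrt N}$. The double sum is a convolution of the form $\sum_y g_N(y)\,(g_N*k)(y)$ with $k(z)=(1+|z|)^{2-d}$, and $g*k(y)\lesssim (1+|y|)^{4-d}$ for $d\ge5$. The whole sum is therefore of order $\sum_{|y|\lesssim\sqrt N}(1+|y|)^{6-2d}$: bounded for $d\ge7$, of order $\log N$ for $d=6$, and of order $\sqrt N$ for $d=5$. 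In every case it is $o(N)$.

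The main obstacle I expect is the covariance step, namely making sure the inhomogeneous cross-covariance term is controlled by the coalescence probability via the coupling with independent walks. The homogeneous computation in \cite{Xue2026} can then be reused verbatim. The lattice convolution estimate is routine; at $d=5$ the cutoff at $|y|\sim\sqrt N$ is needed, and I will verify it carefully.
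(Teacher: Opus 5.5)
Your proposal is correct and follows the paper's route. In both, the crux is the bound $\mathrm{Cov}_{\nu_{\rho,N}}(\eta_s(x),\eta_s(y))\le C\,\Phi(x-y)$, obtained by coupling the coalescing walks with independent ones; your two summands add up exactly to the paper's expression $\tilde{\mathbb{E}}\bigl[1_{\{\tau_{x,y}\le s\}}\rho(X_s^x/\sqrt N)\bigl(1-\rho(X_s^y/\sqrt N)\bigr)\bigr]$. The only difference is that you write out the $L^2$ Green-function convolution estimate giving $\sup_{s\le tN}\mathbb{E}_{\nu_{\rho,N}}G_N(s,\eta_s)^2=o(N)$, whereas the paper imports that step from the homogeneous argument in \cite{Xue2026}.
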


\proof[Proof of Lemma \ref{lemma 3.1}]
For any $x\in \mathbb{Z}^d$, we define
\[
\Phi(x)=\hat{\mathbb{P}}\left(X_t=\mathbf{0}\text{~for some~}t>0\big|X_0=x\right),
\]
where $\{X_t\}_{t\geq 0}$ is the simple random walk defined as in Section \ref{section two}.
According to an argument similar to that given in the proof of Lemma and 5.1 of \cite{Xue2026}, which is the homogeneous version of Lemma \ref{lemma 3.1}, to complete this proof we only need to show that there exists $K_1<+\infty$ such that
\begin{equation}\label{equ 3.3}
{\rm Cov}_{\nu_{\rho, N}}\left(\eta_s(x), \eta_s(y)\right)\leq K_1\Phi(x-y)
\end{equation}
for any $x, y\in \mathbb{Z}^d, s\geq 0$.

To check \eqref{equ 3.3}, for any $x, y\in \mathbb{Z}^d$, we denote by $\{X_t^x\}_{t\geq 0}$ a version of $\{X_t\}_{t\geq 0}$ starting from $x$ and by $\{X_t^y\}_{t\geq 0}$ a version of $\{X_t\}_{t\geq 0}$ starting from $y$. We assume that $\{X_t^x\}_{t\geq 0}$ and $\{X_t^y\}_{t\geq 0}$ are independent. Then we define
\[
\tau_{x, y}=\inf\left\{t\geq 0:~X_t^x=X_t^y\right\}.
\]
We further define $\{\mathcal{X}_t^y\}_{t\geq 0}$ as
\[
\mathcal{X}_t^y=
\begin{cases}
X_t^y & \text{~if~}t\leq \tau_{x, y},\\
X_t^x & \text{~if~}t>\tau_{x, y},
\end{cases}
\]
i.e., $\left\{\left(X_t^x, \mathcal{X}_t^y\right)\right\}_{t\geq 0}$ is the coalescing random walk starting from $(x, y)$. We denote by $\tilde{\mathbb{P}}$ the probability measure of $\{X_t^x\}_{t\geq 0}, \{X_t^y\}_{t\geq 0}, \{\mathcal{X}_t^y\}_{t\geq 0}$ and by $\tilde{\mathbb{E}}$ the expectation with respect to $\tilde{\mathbb{P}}$. By Proposition A.1 of \cite{Xue2026} and the definition of $\nu_{\rho, N}$,
\begin{align*}
\mathbb{E}_{\nu_{\rho, N}}\left(\eta_s(x)\eta_s(y)\right)
&=\tilde{\mathbb{E}}\left(\rho\left(X_s^x/\sqrt{N}\right)\rho\left(\mathcal{X}_s^y/\sqrt{N}\right)1_{\{\tau_{x, y}>s\}}\right)\\
&\text{\quad\quad}+\tilde{\mathbb{E}}\left(\rho\left(X_s^x/\sqrt{N}\right)1_{\{\tau_{x, y}\leq s\}}\right)
\end{align*}
and
\[
\mathbb{E}_{\nu_{\rho, N}}\left(\eta_s(x)\right)\mathbb{E}_{\nu_{\rho, N}}\left(\eta_s(y)\right)
=\tilde{\mathbb{E}}\left(\rho\left(X_s^x/\sqrt{N}\right)\right)\tilde{\mathbb{E}}\left(\rho\left(X_s^y/\sqrt{N}\right)\right).
\]
Since $X_s^x$ and $X_s^y$ are independent,
\begin{align*}
\mathbb{E}_{\nu_{\rho, N}}\left(\eta_s(x)\right)\mathbb{E}_{\nu_{\rho, N}}\left(\eta_s(y)\right)
&=\tilde{\mathbb{E}}\left(\rho\left(X_s^x/\sqrt{N}\right)\rho\left(X_s^y/\sqrt{N}\right)\right)\\
&=\tilde{\mathbb{E}}\left(\rho\left(X_s^x/\sqrt{N}\right)\rho\left(X_s^y/\sqrt{N}\right)1_{\{\tau_{x, y}>s\}}\right)\\
&\text{\quad\quad}+\tilde{\mathbb{E}}\left(\rho\left(X_s^x/\sqrt{N}\right)\rho\left(X_s^y/\sqrt{N}\right)1_{\{\tau_{x, y}\leq s\}}\right)\\
&=\tilde{\mathbb{E}}\left(\rho\left(X_s^x/\sqrt{N}\right)\rho\left(\mathcal{X}_s^y/\sqrt{N}\right)1_{\{\tau_{x, y}>s\}}\right)\\
&\text{\quad\quad}+\tilde{\mathbb{E}}\left(\rho\left(X_s^x/\sqrt{N}\right)\rho\left(X_s^y/\sqrt{N}\right)1_{\{\tau_{x, y}\leq s\}}\right).
\end{align*}
Therefore,
\begin{align*}
{\rm Cov}_{\nu_{\rho, N}}\left(\eta_s(x), \eta_s(y)\right)&\leq \tilde{\mathbb{E}}\left(\rho\left(X_s^x/\sqrt{N}\right)1_{\{\tau_{x, y}\leq s\}}\right)\\
&\text{\quad\quad}-\tilde{\mathbb{E}}\left(\rho\left(X_s^x/\sqrt{N}\right)\rho\left(X_s^y/\sqrt{N}\right)1_{\{\tau_{x, y}\leq s\}}\right)\\
&\leq \tilde{\mathbb{P}}\left(\tau_{x, y}\leq s\right)\leq \tilde{\mathbb{P}}\left(\tau_{x, y}<+\infty\right).
\end{align*}
Since $\{X_t^x-X_t^y\}_{t\geq 0}$ is a copy of $\{X_{2t}\}_{t\geq 0}$ starting from $x-y$, we have
\[
\tilde{\mathbb{P}}\left(\tau_{x, y}<+\infty\right)=\Phi(x-y)
\]
and then \eqref{equ 3.3} holds with $K_1=1$. Since \eqref{equ 3.3} holds, the proof is complete.
\qed

Now we deal with the martingale term in the decomposition \eqref{equ 3.2}.

\begin{lemma}\label{lemma 3.2}
Let $\eta_0$ be distributed with $\nu_{\rho, N}$. For any $T>0$, $\left\{\frac{1}{\sqrt{N}}M_{tN}^N:~0\leq t\leq T\right\}$ converges weakly, with respect to the Skorohod topology, to $\left\{\int_0^tA_{s, d}dB_s:~0\leq t\leq T\right\}$ as $N\rightarrow+\infty$.
\end{lemma}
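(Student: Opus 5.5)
We apply the martingale functional central limit theorem, e.g. Theorem 1.4 of Chapter 7 of Ethier--Kurtz. The limit $\int_0^t A_{s,d}dB_s$ is a continuous centered Gaussian martingale with deterministic quadratic variation $\int_0^t A_{s,d}^2ds$. So it suffices to check two things.

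First, the maximal jump of $\frac{1}{\sqrt N}M^N_{\cdot N}$ on $[0,T]$ tends to $0$. A jump of $M^N$ occurs only when $\eta$ flips at some site $x$, and it has size $|g_N(x)|\le g_N(\mathbf 0)\le\int_0^\infty p_s(\mathbf0,\mathbf0)ds<\infty$ since $d\ge5$. After dividing by $\sqrt N$ the jumps are therefore $O(N^{-1/2})$, and the $L^2$-bound is deterministic.

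Second, for each $t$,
\[
\frac1N\langle M^N\rangle_{tN}\to\int_0^tA_{s,d}^2ds
\]
in probability. Dynkin's formula gives the predictable quadratic variation explicitly:
\[
\langle M^N\rangle_{tN}=\int_0^{tN}\sum_{x}\sum_{y\sim x}\left(\eta_r(x)-\eta_r(y)\right)^2g_N(x)^2\,dr.
\]
The deterministic shift $\mathbb{E}\eta_t(x)$ cancels in $G_N(\eta^{x,y})-G_N(\eta)$. Substituting $r=sN$ turns $\frac1N\langle M^N\rangle_{tN}$ into $\int_0^t\sum_x g_N(x)^2\sum_{y\sim x}(\eta_{sN}(x)-\eta_{sN}(y))^2ds$.

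For the convergence of the mean, I use the claimed approximation
\[
\mathbb E_{\nu_{\rho,N}}\left(\eta_{sN}(x)-\eta_{sN}(y)\right)^2\to2\gamma_d\varrho(s,\mathbf0)(1-\varrho(s,\mathbf0)),
\]
uniformly over $x$ in any fixed finite set, $y\sim x$, and $s\in[\epsilon,t]$. The proof of this uses the duality of Proposition A.1 of \cite{Xue2026}. The expectation equals
\[
\tilde{\mathbb E}\left[1_{\{\tau_{x,y}>sN\}}\left(\rho(X^x_{sN}/\sqrt N)+\rho(\mathcal X^y_{sN}/\sqrt N)-2\rho(X^x_{sN}/\sqrt N)\rho(\mathcal X^y_{sN}/\sqrt N)\right)\right].
\]
\begin{itemize}
\item By transience, $\tilde{\mathbb P}(\tau_{x,y}>sN)\to\tilde{\mathbb P}(\tau_{x,y}=\infty)=\gamma_d$ when $x\sim y$.
\item The event $\{\tau_{x,y}\in(M,\infty)\}$ has small probability for large $M$.
\item On $\{\tau_{x,y}>M\}$, the positions at time $M$ are $O(1)$. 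By the strong Markov property and Donsker's invariance principle, $X^x_{sN}/\sqrt N$ and $X^y_{sN}/\sqrt N$ converge to two independent copies of $\mathcal W_{2s}$, which decouple from the event $\{\tau=\infty\}$ determined at early times. Uniform continuity of $\rho$ handles the $O(1/\sqrt N)$ shifts.
\item The integrand therefore averages to $2\varrho(s,\mathbf0)(1-\varrho(s,\mathbf0))$, giving the limit $2\gamma_d\varrho(s,\mathbf0)(1-\varrho(s,\mathbf0))$.
\end{itemize}

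It remains to assemble the mean. Since $\sum_x g_N(x)^2\to\sum_x g(x)^2=\int_0^\infty\theta p_\theta(\mathbf0,\mathbf0)d\theta$ (via Chapman--Kolmogorov), the $2d$ neighbors together give $4d\gamma_d\int_0^\infty\theta p_\theta d\theta\,\varrho(1-\varrho)=A_{s,d}^2$. Tails in $x$ are controlled by summability of $g^2$ together with the bound $(\eta(x)-\eta(y))^2\le1$.

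The main obstacle is the concentration step: showing the variance of $\frac1N\langle M^N\rangle_{tN}$ vanishes. I write the variance as a double integral in $s,s'$ and a double sum over $x,x'$ of covariances of $(\eta_{sN}(x)-\eta_{sN}(y))^2$ and $(\eta_{s'N}(x')-\eta_{s'N}(y'))^2$. By duality these are four-point coalescing random walk correlations. I expect them to decay when $|x-x'|$ or $|s-s'|N$ is large, because walks started from distant points, or examined over long time gaps, rarely meet in $d\ge5$. I would reproduce the homogeneous argument of \cite{Xue2026}. The key observation is that the inhomogeneous initial measure is still a product measure, so the initial configuration contributes covariance only through coalescence events. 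The Poisson flow bounds from \cite{Birkner2007} used there then carry over with $p(1-p)$ replaced by the bound $1$. Tightness in the Skorohod topology follows from the FCLT itself.
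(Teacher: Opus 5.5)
Your proposal is correct and follows the paper's route. You reduce, via the martingale CLT, to convergence of $\frac{1}{N}\mathbb{E}_{\nu_{\rho,N}}\langle M^N\rangle_{tN}$, using $\sum_x g_N^2(x)\to\int_0^{+\infty}\theta p_\theta(\mathbf{0},\mathbf{0})d\theta$, tail control and the duality/Donsker limit \eqref{equ 3.8}, together with the time-decorrelation estimate \eqref{equ 3.5}, which you and the paper both import from \cite{Xue2026} because it uses only the product structure of $\nu_{\rho,N}$. The differences are cosmetic: you check the jump condition explicitly, and you prove \eqref{equ 3.8} by conditioning on $\{\tau_{x,y}>M\}$ at a deterministic time rather than subtracting the coalesced part $\{\tau_{x,y}\leq K_2\}$ via the strong Markov property at $\tau_{x,y}$ (and only decay in the time gap is actually needed for concentration, since $\sum_x g_N^2(x)$ is bounded).
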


\proof[Proof of Lemma \ref{lemma 3.2}]
We denote by $\{\langle M^N\rangle_t\}_{t\geq 0}$ the quadratic variation process of $\{M^N_t\}_{t\geq 0}$. According to an argument similar to that given in the proof of  Lemma 5.2 of \cite{Xue2026}, which is the homogeneous version of Lemma \ref{lemma 3.2}, to complete this proof we only need to show that
\begin{equation}\label{equ 3.4}
\lim_{N\rightarrow+\infty}\frac{1}{N}\mathbb{E}_{\nu_{\rho, N}}\langle M^N\rangle_{tN}=\int_0^t A^2_{s, d}ds
\end{equation}
for any $t\geq 0$ and
\begin{equation}\label{equ 3.5}
\lim_{r\rightarrow+\infty}\sup_{x\sim y, z\sim w, s\geq 0, N\geq 1}{\rm Cov}_{\nu_{\rho, N}}\left(\left(\eta_{s}(y)-\eta_{s}(x)\right)^2, \left(\eta_{s+r}(z)-\eta_{s+r}(w)\right)^2\right)=0.
\end{equation}
The proof of \eqref{equ 3.5} in the case $\rho\equiv p\in (0, 1)$, which was given in Section 3 of \cite{Xue2026}, only utilizes the fact that the initial distribution of the voter model is a product measure, i.e., does not rely on the homogeneity assumption and hence is also applicable in the inhomogeneous case. Therefore, we only need to check \eqref{equ 3.4} to complete this proof. According to Dynkin's martingale formula,
\[
\langle M^N\rangle_t=\int_0^t\sum_{x\in \mathbb{Z}^d}\sum_{y: y\sim x}g_N^2(x)\left(\eta_s(y)-\eta_s(x)\right)^2ds
\]
and hence
\[
\frac{1}{N}\langle M^N\rangle_{tN}=\int_0^t\sum_{x\in \mathbb{Z}^d}\sum_{y: y\sim x}g_N^2(x)\left(\eta_{sN}(y)-\eta_{sN}(x)\right)^2ds.
\]
By Lemma 5.3 of \cite{Xue2026},
\begin{equation}\label{equ 3.6}
\lim_{N\rightarrow+\infty}\sum_{x\in \mathbb{Z}^d}g_N^2(x)=\sum_{x\in \mathbb{Z}^d}g_\infty^2(x)=\int_0^{+\infty}\theta p_\theta(\mathbf{0}, \mathbf{0})d\theta,
\end{equation}
where
\[
g_\infty(x)=\int_0^{+\infty}p_s(\mathbf{0}, x)ds.
\]
For any $x\in \mathbb{Z}^d$, we denote by $\|x\|_1$ the $l_1$-norm of $x$. According to the definition of $g_N$, we have
\[
\sum_{x: \|x\|_1\geq K}g_N^2(x)\leq \sum_{x:\|x\|_1\geq K}g_\infty^2(x)
\]
for any $K>0$ and hence
\begin{equation}\label{equ 3.7}
\lim_{K\rightarrow+\infty}\sup_{N\geq 1}\sum_{x: \|x\|_1\geq K}g_N^2(x)\leq \lim_{K\rightarrow+\infty}\sum_{x: \|x\|_1\geq K}g_\infty^2(x)=0.
\end{equation}
By \eqref{equ 3.6}, \eqref{equ 3.7} and the definition of $A_{s, d}$, to check \eqref{equ 3.4} we only need to show that
\begin{equation}\label{equ 3.8}
\lim_{N\rightarrow+\infty}\sup_{x, y: \|x\|_1\leq K, y\sim x}\left|\mathbb{E}_{\nu_{\rho, N}}\left(\left(\eta_{sN}(y)-\eta_{sN}(x)\right)^2\right)
-2\gamma_d\varrho(s, \mathbf{0})\left(1-\varrho(s, \mathbf{0})\right)\right|=0
\end{equation}
for any $s>0$ and $K>0$.

Now we check \eqref{equ 3.8}. For $x\sim y$, let $X_t^x, X_t^y, \mathcal{X}_t^y, \tau_{x, y}$ be defined as in the proof of Lemma \ref{lemma 3.1}. According to Proposition A. 1 of \cite{Xue2026},
\[
\mathbb{E}_{\nu_{\rho, N}}\left(\left(\eta_{sN}(y)-\eta_{sN}(x)\right)^2\right)=\tilde{\mathbb{E}}\mathcal{H}\left(\mathcal{X}_{sN}^y, X_{sN}^x\right),
\]
where
\[
\mathcal{H}(z, w)=\mathbb{E}_{\nu_{\rho, N}}\left(\left(\eta_0(z)-\eta_0(w)\right)^2\right)
\]
for any $z, w\in \mathbb{Z}^d$. On the event $\tau_{x, y}\leq sN$, $\mathcal{X}_{sN}^y=X_{sN}^x$ and hence $\mathcal{H}\left(\mathcal{X}_{sN}^y, X_{sN}^x\right)=0$. For $z\neq w$, according to the definition of $\nu_{\rho, N}$,
\[
\mathcal{H}(z, w)=\rho(z/\sqrt{N})(1-\rho(w/\sqrt{N}))+\rho(w/\sqrt{N})(1-\rho(z/\sqrt{N})).
\]
Hence, via the fact that $\mathcal{X}_{sN}^y=X_{sN}^y$ when $\tau_{x, y}>sN$,
\begin{align}\label{equ 3.9}
&\mathbb{E}_{\nu_{\rho, N}}\left(\left(\eta_{sN}(y)-\eta_{sN}(x)\right)^2\right) \\
&=\tilde{\mathbb{E}}\left(1_{\{\tau_{x, y}>sN\}}\left(\rho(X_{sN}^x/\sqrt{N})(1-\rho(X_{sN}^y/\sqrt{N}))+\rho(X_{sN}^y/\sqrt{N})(1-\rho(X_{sN}^x/\sqrt{N}))\right)\right), \notag
\end{align}
where $1_A$ is the indicator function of the event $A$. Since $X_{sN}^x$ and $X_{sN}^y$ are independent,
\begin{equation}\label{equ 3.10}
\tilde{\mathbb{E}}\left(\rho(X_{sN}^x/\sqrt{N})(1-\rho(X_{sN}^y/\sqrt{N}))\right)
=\tilde{\mathbb{E}}\left(\rho(X_{sN}^x/\sqrt{N})\right)\tilde{\mathbb{E}}\left(1-\rho(X_{sN}^y/\sqrt{N})\right).
\end{equation}
Since $X_{sN}^x$ (resp. $X_{sN}^y$) and $X_{sN}^\mathbf{0}+x$ (resp. $X_{sN}^\mathbf{0}+y$) have the same probability distribution, for any $x, y\in \mathbb{Z}^d$ such that $x\sim y$ and $\|x\|_1\leq K$, we have
\begin{equation}\label{equ 3.11}
\left|\tilde{\mathbb{E}}\left(\rho(X_{sN}^x/\sqrt{N})\right)\tilde{\mathbb{E}}\left(1-\rho(X_{sN}^y/\sqrt{N})\right)-\varrho(s, \mathbf{0})(1-\varrho(s, \mathbf{0}))\right|\leq 2(\varepsilon_1^N+\varepsilon_2^N),
\end{equation}
where
\[
\varepsilon_1^N=\left|\tilde{\mathbb{E}}\left(\rho(X_{sN}^\mathbf{0}/\sqrt{N})\right)-\varrho(s, \mathbf{0})\right|
=\left|\tilde{\mathbb{E}}\left(\rho(X_{sN}^\mathbf{0}/\sqrt{N})\right)-\mathbb{E}\rho(\mathcal{W}^d_{2s})\right|
\]
and
\[
\varepsilon_2^N=\sup\{\left|\rho(u)-\rho(v)\right|:~u, v\in \mathbb{R}^d, \|u-v\|_1\leq (K+1)/\sqrt{N}\}.
\]
By Donsker's invariance principle and the uniform continuity of $\rho$,
\[
\lim_{N\rightarrow+\infty}\varepsilon_1^N=\lim_{N\rightarrow+\infty}\varepsilon_2^N=0.
\]
Therefore, by \eqref{equ 3.10} and \eqref{equ 3.11},
\begin{equation}\label{equ 3.12}
\lim_{N\rightarrow+\infty}\sup_{x, y: \|x\|_1\leq K, y\sim x}\left|\tilde{\mathbb{E}}\left(\rho(X_{sN}^x/\sqrt{N})(1-\rho(X_{sN}^y/\sqrt{N}))\right)
-\varrho(s, \mathbf{0})(1-\varrho(s, \mathbf{0}))\right|=0.
\end{equation}
According to an argument similar to that leading to \eqref{equ 3.12}, we have
\begin{equation}\label{equ 3.13}
\lim_{N\rightarrow+\infty}\sup_{x, y: \|x\|_1\leq K, y\sim x}\left|\tilde{\mathbb{E}}\left(\rho(X_{sN}^y/\sqrt{N})(1-\rho(X_{sN}^x/\sqrt{N}))\right)
-\varrho(s, \mathbf{0})(1-\varrho(s, \mathbf{0}))\right|=0.
\end{equation}
By \eqref{equ 3.9}, \eqref{equ 3.12} and \eqref{equ 3.13}, to check \eqref{equ 3.8} we only need to show that
\begin{align}\label{equ 3.14}
\lim_{N\rightarrow+\infty}&\sup_{x, y: \|x\|_1\leq K, y\sim x}\Big|\tilde{\mathbb{E}}\left(1_{\{\tau_{x, y}\leq sN\}}\rho(X_{sN}^x/\sqrt{N})(1-\rho(X_{sN}^y/\sqrt{N}))\right)\notag\\
&\text{\quad\quad\quad\quad}-(1-\gamma_d)\varrho(s, \mathbf{0})(1-\varrho(s, \mathbf{0}))\Big|=0
\end{align}
and
\begin{align}\label{equ 3.15}
\lim_{N\rightarrow+\infty}&\sup_{x, y: \|x\|_1\leq K, y\sim x}\Big|\tilde{\mathbb{E}}\left(1_{\{\tau_{x, y}\leq sN\}}\rho(X_{sN}^y/\sqrt{N})(1-\rho(X_{sN}^x/\sqrt{N}))\right)\notag\\
&\text{\quad\quad\quad\quad}-(1-\gamma_d)\varrho(s, \mathbf{0})(1-\varrho(s, \mathbf{0}))\Big|=0.
\end{align}
Equations \eqref{equ 3.14} and \eqref{equ 3.15} follow from the same argument and hence here we only check \eqref{equ 3.14}. For simplicity, we denote the term
\[
\sup_{x, y: \|x\|_1\leq K, y\sim x}\Big|\tilde{\mathbb{E}}\left(1_{\{\tau_{x, y}\leq sN\}}\rho(X_{sN}^x/\sqrt{N})(1-\rho(X_{sN}^y/\sqrt{N}))\right)-(1-\gamma_d)\varrho(s, \mathbf{0})(1-\varrho(s, \mathbf{0}))\Big|
\]
by ${\rm \uppercase\expandafter{\romannumeral1}}_N$. We denote $(1, 0, \ldots, 0)$ by $e_1$. For any $K_2>0$ and $x\sim y$,
\[
\tilde{\mathbb{P}}(K_2<\tau_{x, y}\leq sN)\leq \tilde{\mathbb{P}}(K_2<\tau_{x,y}<+\infty)=\tilde{\mathbb{P}}(K_2<\tau_{\mathbf{0},e_1}<+\infty)
\]
and
\[
(1-\gamma_d)-\tilde{\mathbb{P}}(\tau_{0, e_1}\leq K_2)=\tilde{\mathbb{P}}(K_2<\tau_{\mathbf{0},e_1}<+\infty).
\]
Hence,
\[
\left|{\rm \uppercase\expandafter{\romannumeral1}}_N-{\rm \uppercase\expandafter{\romannumeral2}}_N^{K_2}\right|\leq 2\varepsilon_3(K_2),
\]
where
\[
\varepsilon_3(K_2)=\tilde{\mathbb{P}}(K_2<\tau_{\mathbf{0},e_1}<+\infty)
\]
and
\begin{align*}
{\rm \uppercase\expandafter{\romannumeral2}}_N^{K_2}=&\sup_{x, y: \|x\|_1\leq K, y\sim x}\Big|\tilde{\mathbb{E}}\left(1_{\{\tau_{x, y}\leq K_2\}}\rho(X_{sN}^x/\sqrt{N})(1-\rho(X_{sN}^y/\sqrt{N}))\right)\\
&\text{\quad\quad\quad\quad}-\tilde{\mathbb{P}}(\tau_{\mathbf{0}, e_1}\leq K_2)\varrho(s, \mathbf{0})(1-\varrho(s, \mathbf{0}))\Big|.
\end{align*}
Since $\lim_{K_2\rightarrow+\infty}\varepsilon_3(K_2)=\tilde{\mathbb{P}}(\emptyset)=0$, to check \eqref{equ 3.14} we only need to show that
\begin{equation}\label{equ 3.16}
\lim_{N\rightarrow+\infty}{\rm \uppercase\expandafter{\romannumeral2}}_N^{K_2}=0
\end{equation}
for any $K_2>0$. For any $K_3>0$ and $x\sim y$, we denote by $A_{x, y}(K_3)$ the event that
\[
\|X_{\tau_{x, y}}^x-x\|_1>K_3.
\]
On the event $A_{x, y}(K_3)\bigcap \{\tau_{x, y}\leq K_2\}$, we have
\[
\sup_{0\leq t\leq K_2}\|X_t^x-x\|_1>K_3
\]
and hence
\[
\tilde{\mathbb{P}}\left(A_{x, y}(K_3)\bigcap \{\tau_{x, y}\leq K_2\}\right)
\leq \tilde{\mathbb{P}}\left(\sup_{0\leq t\leq K_2}\|X_t^\mathbf{0}\|_1>K_3\right).
\]
Since
\[
\lim_{K_3\rightarrow+\infty}\tilde{\mathbb{P}}\left(\sup_{0\leq t\leq K_2}\|X_t^\mathbf{0}\|_1>K_3\right)=0,
\]
to check \eqref{equ 3.16} we only need to show that
\begin{equation}\label{equ 3.17}
\lim_{N\rightarrow+\infty}{\rm \uppercase\expandafter{\romannumeral3}}_N^{K_2, K_3}=0
\end{equation}
for any $K_2, K_3>0$, where
\begin{align*}
{\rm \uppercase\expandafter{\romannumeral3}}_N^{K_2, K_3}=&\sup_{x, y: \|x\|_1\leq K, y\sim x}\Big|\tilde{\mathbb{E}}\left(1_{\{\tau_{x, y}\leq K_2\}}1_{A_{x, y}^c(K_3)}\rho(X_{sN}^x/\sqrt{N})(1-\rho(X_{sN}^y/\sqrt{N}))\right)\\
&\text{\quad\quad\quad\quad}-\tilde{\mathbb{P}}\left(\tau_{\mathbf{0}, e_1}\leq K_2, A^c_{\mathbf{0}, e_1}(K_3)\right)\varrho(s, \mathbf{0})(1-\varrho(s, \mathbf{0}))\Big|
\end{align*}
and $A^c$ is the of complement of $A$. According to the strong Markov property of the simple random walk,
\[
\tilde{\mathbb{E}}\left(\rho(X_{sN}^x/\sqrt{N})(1-\rho(X_{sN}^y/\sqrt{N}))\Big|\tau_{x, y}, X_{\tau_{x, y}}^x\right)
=\mathcal{H}_2(X_{\tau_{x, y}}^x, sN-\tau_{x, y}),
\]
where
\[
\mathcal{H}_2(z, r)=\tilde{\mathbb{E}}\rho(X_r^z/\sqrt{N})\tilde{\mathbb{E}}\left(1-\rho(X_r^z/\sqrt{N})\right)
\]
for any $z\in \mathbb{Z}^d$ and $r>0$. By the law of iterated expectations,
\begin{align*}
&\tilde{\mathbb{E}}\left(1_{\{\tau_{x, y}\leq K_2\}}1_{A_{x, y}^c(K_3)}\rho(X_{sN}^x/\sqrt{N})(1-\rho(X_{sN}^y/\sqrt{N}))\right)\\
&=\tilde{\mathbb{E}}\left(1_{\{\tau_{x, y}\leq K_2\}}1_{A_{x, y}^c(K_3)}\mathcal{H}_2(X_{\tau_{x, y}}^x, sN-\tau_{x, y})\right).
\end{align*}
Therefore,
\begin{align}\label{equ 3.18}
{\rm \uppercase\expandafter{\romannumeral3}}_N^{K_2, K_3}=&\sup_{x, y: \|x\|_1\leq K, y\sim x}\Big|\tilde{\mathbb{E}}\left(1_{\{\tau_{x, y}\leq K_2\}}1_{A_{x, y}^c(K_3)}\rho(X_{sN}^x/\sqrt{N})(1-\rho(X_{sN}^y/\sqrt{N}))\right)\notag\\
&\text{\quad\quad\quad\quad}-\tilde{\mathbb{P}}\left(\tau_{x, y}\leq K_2, A^c_{x, y}(K_3)\right)\varrho(s, \mathbf{0})(1-\varrho(s, \mathbf{0}))\Big|\notag\\
&\leq {\rm \uppercase\expandafter{\romannumeral4}}_N^{K_2, K_3},
\end{align}
where
\[
{\rm \uppercase\expandafter{\romannumeral4}}_N^{K_2, K_3}=\sup_{z, t: \|z\|_1\leq (K+K_3), t\leq K_2}\left|\mathcal{H}_2(z, sN-t)-\varrho(s, \mathbf{0})(1-\varrho(s, \mathbf{0}))\right|.
\]
Since $\varrho(s, \mathbf{0})=\mathbb{E}\rho(\mathcal{W}_{2s}^d)$, according to the Donsker's invariance principle of the simple random walk,
\[
\lim_{N\rightarrow+\infty}\sup_{z, t: \|z\|_1\leq (K+K_3), t\leq K_2}|\tilde{\mathbb{E}}\rho(X_{sN-t}^z/\sqrt{N})-\varrho(s, \mathbf{0})|=0
\]
and hence
\begin{equation}\label{equ 3.19}
\lim_{N\rightarrow+\infty}{\rm \uppercase\expandafter{\romannumeral4}}_N^{K_2, K_3}=0.
\end{equation}
Equation \eqref{equ 3.17} follows from \eqref{equ 3.18} and \eqref{equ 3.19}. Since \eqref{equ 3.17} holds, the proof is complete.
\qed

At last we prove Theorem \ref{theorem 2.1 main d geq 4} in the case $d\geq 5$.

\proof[Proof of Theorem \ref{theorem 2.1 main d geq 4} in the case $d\geq 5$]
By Lemmas \ref{lemma 3.1}, \ref{lemma 3.2} and Equation \eqref{equ 3.2}, we only need to show that
\[
\left\{\frac{1}{\sqrt{N}}\xi_{tN}^{N, d}:~0\leq t\leq T\right\}_{N\geq 1}
\]
are tight under the uniform topology of $C[0, T]$. In the homogeneous case where $\rho\equiv p\in (0, 1)$, it is shown in Section 5 of \cite{Xue2026} that there exists $K_4<+\infty$ independent of $t, s\geq 0$ and $N\geq 1$ such that
\begin{equation}\label{equ 3.20}
\mathbb{E}_{\nu_{\rho, N}}\left(\left(\frac{1}{\sqrt{N}}\xi_{tN}^{N, d}-\frac{1}{\sqrt{N}}\xi_{sN}^{N, d}\right)^4\right)\leq K_4(t-s)^2
\end{equation}
for any $0\leq s<t$ and $N\geq 1$. The proof of \eqref{equ 3.20} given in \cite{Xue2026} only utilizes the fact that the initial distribution is a product measure, i.e, does not rely on the homogeneity assumption and is also applicable in the inhomogeneous case. According to Corollary 14.9 of \cite{Kallenberg1997}, the tightness of
\[
\left\{\frac{1}{\sqrt{N}}\xi_{tN}^{N, d}:~0\leq t\leq T\right\}_{N\geq 1}
\]
follows from \eqref{equ 3.20}. Therefore, the proof is complete.
\qed

\subsection{Proof of Theorem \ref{theorem 2.1 main d geq 4}: The case $d=4$}\label{subsection 3.2}
In this subsection, we prove Theorem \ref{theorem 2.1 main d geq 4} in the case $d=4$. So throughout this subsection we assume that $d=4$. Let $g_N(x)$, $G_N(t, \eta)$ and $M_t^N$ be defined as in Subsection \ref{subsection 3.1} except that all $\mathbb{Z}^d$ in the expressions of these notations are replaced by $\mathbb{Z}^4$. Then we have the following analogue of \eqref{equ 3.2},
\begin{align}\label{equ 3.21}
&\frac{1}{\sqrt{N\log N}}\xi_{tN}^N\\
&=\frac{M_{tN}^N}{\sqrt{N\log N}}-\left(\frac{G_N(tN, \eta_{tN})-G_N(0, \eta_0)}{\sqrt{N\log N}}
-\frac{1}{N^{3/2}\sqrt{\log N}}\int_0^{tN}G_N(s, \eta_s)ds\right). \notag
\end{align}
According to an argument similar to that given in the proof of Lemma \ref{lemma 3.1}, we have a $\mathbb{Z}^4$-version of \eqref{equ 3.3}
and the following lemma.
\begin{lemma}\label{lemma 3.3}
Let $\eta_0$ be distributed with $\nu_{\rho, N}$. For any $t\geq 0$,
\[
\lim_{N\rightarrow+\infty}\frac{1}{\sqrt{N\log N}}\left(G_N(tN, \eta_{tN})-G_N(0, \eta_0)\right)
-\frac{1}{N^{3/2}\sqrt{\log N}}\int_0^{tN}G_N(s, \eta_s)ds=0
\]
in probability.
\end{lemma}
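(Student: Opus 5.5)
We prove Lemma \ref{lemma 3.3} by a second-moment (Chebyshev) argument, following the proof of Lemma \ref{lemma 3.1}: the only input that depends on the initial distribution is the covariance bound \eqref{equ 3.3}, and its $\mathbb{Z}^4$-version holds with $K_1=1$. Write $\Phi(x)$ for the hitting probability of $\mathbf{0}$ by the simple random walk on $\mathbb{Z}^4$ started at $x$. Since every term is centered (the centering of $G_N(t,\cdot)$ is exactly $\mathbb{E}_{\nu_{\rho,N}}\eta_t(x)$), for each fixed $u\geq 0$ we have
\[
\mathbb{E}_{\nu_{\rho, N}}\left(G_N(u, \eta_u)^2\right)=\sum_{x, y\in \mathbb{Z}^4}g_N(x)g_N(y){\rm Cov}_{\nu_{\rho, N}}\left(\eta_u(x), \eta_u(y)\right)\leq \sum_{x, y\in \mathbb{Z}^4}g_N(x)g_N(y)\Phi(x-y).
\]
Here we use $g_N\geq 0$ together with \eqref{equ 3.3}. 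The right-hand side is independent of $u$ and of $\rho$, and it is exactly the quantity that bounds the variance in the homogeneous case treated in \cite{Xue2026}.

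The key step is to show that
\[
S_N:=\sum_{x,y}g_N(x)g_N(y)\Phi(x-y)=o(N\log N).
\]
On $\mathbb{Z}^4$ we have $g_N(x)\leq C(1+\|x\|)^{-2}$, and $g_N$ decays exponentially beyond $\|x\|\gg\sqrt N$. We also have $\Phi(z)\leq C(1+\|z\|)^{-2}$. Hence $\sum_y g_N(y)\Phi(x-y)$ is a discrete convolution of two $|\cdot|^{-2}$ kernels in dimension $4$, truncated at scale $\sqrt N$, which is $O(\log N)$. Summing against $g_N(x)$ gives at most $\sum_{\|x\|\lesssim\sqrt N}\|x\|^{-2}\log N=O(N\log N)$. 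This crude bound loses exactly the factor we need, so a sharper estimate is required. Using $\Phi=g_\infty/g_\infty(\mathbf{0})$ (up to the usual constant relating Green's function and hitting probability) and the semigroup property, $\sum_y g_N(y)g_\infty(x-y)=\int_0^\infty\int_0^\infty e^{-s/N}p_{s+r}(\mathbf{0},x)\,ds\,dr$. This is of order $\|x\|^{-2}\log(\sqrt N/\|x\|)$ for $\|x\|\leq \sqrt N$. Pairing it with $g_N(x)\sim\|x\|^{-2}$ and summing over shells $\|x\|\sim 2^k$ yields $\sum_{k\leq \frac12\log_2 N}2^{2k}\cdot 2^{-4k}\cdot 2^{2k}(\log N-k)$, which is $O((\log N)^2)$. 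Hence $S_N=O((\log N)^2)=o(N\log N)$.

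I expect this step to be the main obstacle: the convolution must be done carefully and not by the crude bound. However, the estimate does not involve $\rho$ at all, so it can be quoted from the proof of the homogeneous Lemma 5.1 in \cite{Xue2026}.

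With the bound on $S_N$ in hand, Chebyshev's inequality gives $(N\log N)^{-1/2}(G_N(tN,\eta_{tN})-G_N(0,\eta_0))\to 0$ in $L^2$. For the integral term, Minkowski's inequality gives
\[
\left\|\frac{1}{N^{3/2}\sqrt{\log N}}\int_0^{tN}G_N(s,\eta_s)\,ds\right\|_{L^2}\leq \frac{tN}{N^{3/2}\sqrt{\log N}}\sup_s\|G_N(s,\eta_s)\|_{L^2}\leq \frac{t\sqrt{S_N}}{\sqrt{N\log N}},
\]
which tends to $0$. Convergence in probability then follows.
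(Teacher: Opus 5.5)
Your strategy matches the paper's. The paper also only verifies the $\mathbb{Z}^4$ version of the covariance bound \eqref{equ 3.3} and defers the $\rho$-free second-moment estimate to the homogeneous proof, and your Chebyshev and Minkowski steps are fine.

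\textbf{The flaw is in the one computation you actually carry out: the bound on $S_N$.} The convolution
\[
\sum_{y}g_N(y)g_\infty(x-y)=\int_0^{+\infty}p_u(\mathbf{0},x)\,N\left(1-e^{-u/N}\right)du
\]
is not of order $\|x\|^{-2}\log(\sqrt N/\|x\|)$. For $\|x\|^2\le u\le N$ the integrand is $\asymp u\cdot u^{-2}=u^{-1}$, so the convolution is $\asymp\log(\sqrt N/\|x\|)+1$, with no $\|x\|^{-2}$ factor. This is just the borderline $|\cdot|^{-2}*|\cdot|^{-2}$ convolution in dimension $4$ from your own crude bound, now localized. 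A shell $\|x\|\sim 2^k$ of $\mathbb{Z}^4$ contains $\asymp 2^{4k}$ points, so the shell sum is $\sum_{k\le\frac12\log_2N}2^{2k}\left(\tfrac12\log_2N-k+1\right)\asymp N$, dominated by the outermost shells. Hence $S_N=O((\log N)^2)$ is false; in fact $S_N=\Theta(N)$.

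\textbf{The repair is short, and your conclusion survives.} Since $\Phi=g_\infty/g_\infty(\mathbf{0})$, symmetry and Chapman--Kolmogorov give
\[
\sum_{x,y}g_N(x)g_N(y)g_\infty(x-y)=\int_0^{+\infty}p_u(\mathbf{0},\mathbf{0})\left(\int_0^u ve^{-v/N}dv\right)du\le \frac12+\frac{K_5N}{2}+K_5N.
\]
The inequality uses $p_u(\mathbf{0},\mathbf{0})\le\min\{1,K_5u^{-2}\}$ and $\int_0^u ve^{-v/N}dv\le\min\{u^2/2,N^2\}$. The local CLT lower bound $p_u(\mathbf{0},\mathbf{0})\geq cu^{-2}$ shows the left side is also $\geq cN$ for large $N$.

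Therefore $\sup_s\mathbb{E}_{\nu_{\rho,N}}\left(G_N(s,\eta_s)^2\right)=O(N)=o(N\log N)$. Your argument then gives second moment $O(1/\log N)$ for the boundary term and $L^2$-norm $O((\log N)^{-1/2})$ for the integral term, which proves the lemma.

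The margin is a single factor of $\log N$, not the polynomial margin your computation suggested. This is exactly why the extra $\sqrt{\log N}$ in the $d=4$ normalization is what makes these terms negligible.
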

Now we deal with the term $\frac{M_{tN}^N}{\sqrt{N\log N}}$ in \eqref{equ 3.21}. We have the following analogue of Lemma \ref{lemma 3.2}.
\begin{lemma}\label{lemma 3.4}
Let $\eta_0$ be distributed with $\nu_{\rho, N}$. For any $T>0$, $\left\{\frac{1}{\sqrt{N\log N}}M_{tN}^N:~0\leq t\leq T\right\}$ converges weakly, with respect to the Skorohod topology, to $\left\{\int_0^tA_{s, 4}dB_s:~0\leq t\leq T\right\}$ as $N\rightarrow+\infty$.
\end{lemma}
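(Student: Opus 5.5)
We follow the scheme of the proof of Lemma \ref{lemma 3.2}. By the martingale central limit theorem argument of Lemma 5.2 of \cite{Xue2026} (its $d=4$ version), it suffices to prove two things. The first is the decorrelation estimate \eqref{equ 3.5} on $\mathbb{Z}^4$. Its proof only uses that the initial distribution is a product measure, so it carries over verbatim. The second is the convergence of the normalized mean quadratic variation:
\begin{equation*}
\lim_{N\rightarrow+\infty}\frac{1}{N\log N}\mathbb{E}_{\nu_{\rho, N}}\langle M^N\rangle_{tN}=\int_0^t A_{s,4}^2ds .
\end{equation*}
By Dynkin's formula,
\begin{equation*}
\frac{1}{N\log N}\langle M^N\rangle_{tN}=\int_0^t\frac{1}{\log N}\sum_{x\in\mathbb{Z}^4}\sum_{y\sim x}g_N^2(x)\left(\eta_{sN}(y)-\eta_{sN}(x)\right)^2ds .
\end{equation*}
The new feature in $d=4$ is that $\sum_x g_N^2(x)=\int_0^\infty\theta e^{-\theta/N}p_{2\theta}$-type quantity diverges like $\log N$. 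Concretely, we use the $d=4$ analogue of Lemma 5.3 of \cite{Xue2026}:
\begin{equation*}
\lim_{N\to\infty}\frac{1}{\log N}\sum_{x}g_N^2(x)=\frac{1}{8\pi^2}.
\end{equation*}
This constant is consistent with $A_{s,4}^2=\gamma_4\varrho(1-\varrho)/\pi^2$, since $8\cdot\frac{1}{8\pi^2}\cdot\gamma_4\varrho(1-\varrho)=\gamma_4\varrho(1-\varrho)/\pi^2$ (a factor $2d=8$ from neighbours, a factor $2\gamma_4$ from the edge variance). The mass of $g_N^2$ is no longer concentrated on a bounded set; it is spread over $\|x\|\lesssim\sqrt N$ with logarithmic weight.

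So the key step replaces \eqref{equ 3.8} by a statement on the diffusive scale. Fix $\delta>0$ small and $L$ large. Split the sum over $x$ into three regions.

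Region $\|x\|_1\le N^{\delta}$. Here $\sum g_N^2/\log N$ is roughly $\delta/(8\pi^2)\cdot 2$. Actually $\sum_{\|x\|\le R}g_\infty^2\sim\frac{1}{4\pi^2}\log R$, so this region carries a fraction $\approx\delta$ of the total mass.

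Region $\|x\|_1\ge L\sqrt N$. Because of the factor $e^{-s/N}$, $g_N$ decays and this region contributes $o(\log N)$.

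Middle region. This carries all but an $O(\delta)$ fraction of the total mass.

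On the first and third regions we only use the trivial bound $(\eta(y)-\eta(x))^2\le 1$, giving an error $O(\delta)$ after dividing by $\log N$. For the middle region we need the analogue of \eqref{equ 3.8} in which $\varrho(s,\mathbf 0)$ is replaced by $\varrho(s,x/\sqrt N)$. With that analogue in hand, the weighted average becomes
\begin{equation*}
\frac{1}{\log N}\sum_{N^\delta\le\|x\|\le L\sqrt N}g_N^2(x)\,\varrho(s,x/\sqrt N)\bigl(1-\varrho(s,x/\sqrt N)\bigr).
\end{equation*}
We must show this is close to $\varrho(s,\mathbf 0)(1-\varrho(s,\mathbf 0))$ times the total mass. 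The logarithmic mass $\sum g_\infty^2$ over the shell $\{\|x\|\le \epsilon\sqrt N\}$ is $\approx\frac{1}{4\pi^2}\log(\epsilon\sqrt N)$, while the shells $\epsilon\sqrt N\le \|x\|\le L\sqrt N$ contribute only $O(1)$. So the logarithmic mass sits at $\|x\|=o(\sqrt N)$, where $\varrho(s,x/\sqrt N)\to\varrho(s,\mathbf 0)$ by the continuity of $\varrho(s,\cdot)$, itself a consequence of the uniform continuity of $\rho$. This explains why $A_{s,4}$ involves only $\varrho(s,\mathbf 0)$.

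It remains to establish the uniform edge estimate
\begin{equation*}
\lim_{N\to\infty}\sup_{\|x\|_1\le \epsilon\sqrt N,\,y\sim x}\Bigl|\mathbb{E}_{\nu_{\rho,N}}\bigl((\eta_{sN}(y)-\eta_{sN}(x))^2\bigr)-2\gamma_4\varrho(s,x/\sqrt N)(1-\varrho(s,x/\sqrt N))\Bigr|=0,
\end{equation*}
followed by $\epsilon\to0$. We repeat the argument from \eqref{equ 3.9} to \eqref{equ 3.19}, using the duality representation \eqref{equ 3.9}, the truncations at $\tau_{x,y}\le K_2$ and $\|X^x_{\tau}-x\|\le K_3$, and the strong Markov property. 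Translation invariance of the walk shows that the truncation errors $\varepsilon_3(K_2)$ and the tail of $\sup_{t\le K_2}\|X_t\|$ are uniform in $x$. The final step needs a uniform Donsker statement: $\tilde{\mathbb E}\rho((z+X^{\mathbf 0}_{sN-t})/\sqrt N)-\varrho(s,z/\sqrt N)\to0$ uniformly in $z\in\mathbb{Z}^4$ and $t\le K_2$. We get this from $\tilde{\mathbb E}f(X_{sN}/\sqrt N + u)\to \mathbb E f(\mathcal W_{2s}+u)$ uniformly in $u$, for uniformly continuous bounded $f$. The proof couples $X_{sN}/\sqrt N$ and $\mathcal W_{2s}$ so that they are close in probability (Skorokhod representation) and then uses the uniform continuity of $\rho$.

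Tightness of the full process follows as in Subsection \ref{subsection 3.1}. The main obstacle is the logarithmic-scale bookkeeping: we must verify that the $\log N$ mass of $g_N^2$ concentrates where $\varrho(s,x/\sqrt N)\approx\varrho(s,\mathbf 0)$, and we must have the asymptotic $\sum_x g_N^2\sim\log N/(8\pi^2)$ together with its radial distribution. We would first try to derive both from $\sum_x g_N^2(x)=\int_0^\infty\!\int_0^\infty e^{-(a+b)/N}p_{a+b}(\mathbf 0,\mathbf 0)\,da\,db$ and the local CLT $p_t(\mathbf 0,\mathbf 0)\sim(4\pi t)^{-2}$.
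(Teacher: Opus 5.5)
Your architecture matches the paper's. You reduce, via the martingale CLT argument and \eqref{equ 3.5}, to \eqref{equ 3.22}, prove a duality edge estimate on the scale $o(\sqrt N)$, and use that the $\log N$-mass of $g_N^2$ lives at $\|x\|=o(\sqrt N)$. Proving the edge estimate with $\varrho(s,x/\sqrt N)$ and then invoking continuity of $\varrho(s,\cdot)$ is a harmless variant. The paper simply runs the argument of \eqref{equ 3.8} on $\|x\|_1\le N^{1/2-\epsilon}$, where $x/\sqrt N\to\mathbf 0$ uniformly, and needs no inner region $\|x\|_1\le N^{\delta}$.

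Your constant, however, is wrong: $\frac{1}{\log N}\sum_x g_N^2(x)\to\frac{1}{16\pi^2}$, not $\frac{1}{8\pi^2}$. Your own suggested computation gives this, since $\sum_x g_N^2(x)=\int_0^\infty\theta e^{-\theta/N}p_\theta(\mathbf 0,\mathbf 0)\,d\theta$ (with $p_\theta$, not $p_{2\theta}$) and $p_\theta(\mathbf 0,\mathbf 0)\sim(4\pi\theta)^{-2}$. Likewise $\sum_{\|x\|\le R}g_\infty^2\sim\frac{1}{8\pi^2}\log R$, not $\frac{1}{4\pi^2}\log R$. Your consistency check masks the error by writing $\gamma_4\varrho(1-\varrho)$ instead of the edge variance $2\gamma_4\varrho(1-\varrho)$ that you prove later. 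With $\frac{1}{8\pi^2}$, $8$ neighbours and $2\gamma_4$, your argument would yield $2A_{s,4}^2$.

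More substantively, you only assert the one genuinely new estimate of this lemma: the tail bound \eqref{equ 3.26}, that the $g_N^2$-mass off the scale $o(\sqrt N)$ is $o(\log N)$. The route you propose for it cannot deliver it. The identity expressing $\sum_x g_N^2(x)$ through $p_{a+b}(\mathbf 0,\mathbf 0)$ gives the total mass and no radial information.

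The paper gets radial information from the Markov property, $\sum_{\|x\|_1>R}g_N^2(x)=\int_0^\infty\int_0^\infty e^{-(s_1+s_2)/N}\tilde{\mathbb{P}}\bigl(X^{\mathbf 0}_{s_1+s_2}=\mathbf 0,\ \|X^{\mathbf 0}_{s_1}\|_1>R\bigr)\,ds_1\,ds_2$. With $R=N^{1/2-\epsilon}$, it discards the range $s_1+s_2\le N^{1-4\epsilon}$ by a moderate-deviation bound. It bounds the rest using $p_t(\mathbf 0,\mathbf 0)\le K_5t^{-2}$, getting at most $4\epsilon K_5\log N+O(1)$.

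For your split at $\epsilon\sqrt N$ there is an even shorter argument you could have used. The facts $g_N\le g_\infty$, $g_\infty(x)\le C\|x\|^{-2}$ in $d=4$, and $\sum_x g_N(x)=N$ together give $\sum_{\|x\|\ge\epsilon\sqrt N}g_N^2(x)\le\sup_{\|x\|\ge\epsilon\sqrt N}g_N(x)\cdot\sum_x g_N(x)\le \frac{C}{\epsilon^2 N}\cdot N=C\epsilon^{-2}$. This is $o(\log N)$ and disposes of both your outer regions at once. Combined with your diffusive-scale edge estimate, it makes your route slightly simpler than the paper's. It would not suffice for the paper's cutoff $N^{1/2-\epsilon}$, which is why the paper needs the finer computation. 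With one of these arguments supplied and the constant corrected, your proof goes through.
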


\proof[Proof of Lemma \ref{lemma 3.4}]
According to an argument similar to that leading to Lemma \ref{lemma 3.2}, we only need to show the following analogue of \eqref{equ 3.4},
\begin{equation}\label{equ 3.22}
\lim_{N\rightarrow+\infty}\frac{1}{N\log N}\mathbb{E}_{\nu_{\rho, N}}\langle M^N\rangle_{tN}=\int_0^t A^2_{s, 4}ds.
\end{equation}
Similar to that in the proof of Lemma \ref{lemma 3.2},
\begin{equation}\label{equ 3.23}
\frac{1}{N\log N}\langle M^N\rangle_{tN}=\int_0^t\frac{1}{\log N}\sum_{x\in \mathbb{Z}^d}\sum_{y: y\sim x}g_N^2(x)\left(\eta_{sN}(y)-\eta_{sN}(x)\right)^2ds.
\end{equation}
By Lemma 4.3 of \cite{Xue2026}, we have
\begin{equation}\label{equ 3.24}
\lim_{N\rightarrow+\infty}\frac{1}{\log N}\sum_{x\in \mathbb{Z}^d}g_N^2(x)=\frac{1}{16\pi^2}.
\end{equation}
For any $\epsilon>0, s>0$, since $\lim_{N\rightarrow+\infty}\frac{N^{\frac{1}{2}-\epsilon}}{\sqrt{N}}=0$, we have
\begin{equation}\label{equ 3.25}
\lim_{N\rightarrow+\infty}\sup_{x, y: \|x\|_1\leq N^{\frac{1}{2}-\epsilon}, y\sim x}\left|\mathbb{E}_{\nu_{\rho, N}}\left(\left(\eta_{sN}(y)-\eta_{sN}(x)\right)^2\right)
-2\gamma_4\varrho(s, \mathbf{0})\left(1-\varrho(s, \mathbf{0})\right)\right|=0
\end{equation}
according to an argument similar to that leading to \eqref{equ 3.8}. By \eqref{equ 3.23}-\eqref{equ 3.25} and the fact that each $x\in \mathbb{Z}^4$ has $8$ neighbors, to check \eqref{equ 3.22} we only need to show that
\begin{equation}\label{equ 3.26}
\lim_{\epsilon\rightarrow 0}\limsup_{N\rightarrow+\infty}\frac{1}{\log N}\sum_{x:\|x\|_1>N^{\frac{1}{2}-\epsilon}}g_N^2(x)=0.
\end{equation}
According to the definition of $g_N(x)$ and the Markov property of $\{X_t\}_{t\geq 0}$,
\begin{align*}
\sum_{x:\|x\|_1>N^{\frac{1}{2}-\epsilon}}g_N^2(x)
&=\int_0^{+\infty}\int_0^{+\infty}e^{-(s_1+s_2)/N}\tilde{\mathbb{P}}\left(X_{s_1+s_2}^\mathbf{0}=\mathbf{0}, \|X_{s_1}^\mathbf{0}\|_1>N^{\frac{1}{2}-\epsilon}\right)ds_1ds_2\\
&=N^2\int_0^{+\infty}e^{-u}\left(\int_0^u\tilde{\mathbb{P}}\left(X_{Nu}^\mathbf{0}=\mathbf{0}, \|X_{Nr}^\mathbf{0}\|_1>N^{\frac{1}{2}-\epsilon}\right)dr\right)du.
\end{align*}
According to the moderate deviation principle of the simple random walk,
\[
\sup_{0\leq r\leq N^{-4\epsilon}}\tilde{\mathbb{P}}\left(\|X_{Nr}^\mathbf{0}\|_1>N^{\frac{1}{2}-\epsilon}\right)=\exp\{-\Theta(N^{2\epsilon})\}.
\]
Hence,
\[
\sum_{x:\|x\|_1>N^{\frac{1}{2}-\epsilon}}g_N^2(x)\leq {\rm \uppercase\expandafter{\romannumeral5}}_N(\epsilon)+o(1),
\]
where
\[
{\rm \uppercase\expandafter{\romannumeral5}}_N(\epsilon)=N^2\int_{N^{-4\epsilon}}^{+\infty}ue^{-u}p_{Nu, 4}(\mathbf{0}, \mathbf{0})du.
\]
According to the local central limit theorem of the simple random walk, there exists $K_5<+\infty$ independent of $t$ such that
\[
t^2p_{t, 4}(\mathbf{0}, \mathbf{0})\leq K_5
\]
for all $t\geq 0$. Hence,
\begin{align*}
{\rm \uppercase\expandafter{\romannumeral5}}_N(\epsilon)&\leq
K_5N^2\int^{+\infty}_{N^{-4\epsilon}}ue^{-u}\frac{1}{N^2u^2}du\\
&=K_5\int_{N^{-4\epsilon}}^{+\infty}\frac{1}{u}e^{-u}du\\
&\leq O(1)+K_5\int_{N^{-4\epsilon}}^1\frac{1}{u}du=O(1)+4K_5\epsilon \log N.
\end{align*}
As a result,
\[
\limsup_{N\rightarrow+\infty}\frac{1}{\log N}\sum_{x:\|x\|_1>N^{\frac{1}{2}-\epsilon}}g_N^2(x)\leq 4\epsilon K_5
\]
and hence \eqref{equ 3.26} holds. Therefore, the proof is complete.
\qed

At last, we prove Theorem \ref{theorem 2.1 main d geq 4} in the case $d=4$.

\proof[Proof of Theorem \ref{theorem 2.1 main d geq 4} in the case $d=4$]
It is shown in Section 4 of \cite{Xue2026} that, when $\rho\equiv p\in (0, 1)$, there exists $K_6<+\infty$ independent of $t,s>0$ and $N\geq 1$ such that
\begin{equation}\label{equ 3.27}
\mathbb{E}_{\nu_{\rho, N}}\left(\left(\frac{1}{\sqrt{N\log N}}\xi_{tN}^{N, 4}-\frac{1}{\sqrt{N\log N}}\xi_{sN}^{N, 4}\right)^4\right)\leq K_6(t-s)^2
\end{equation}
for any $0\leq s<t$ and $N\geq 1$. The proof of \eqref{equ 3.27} given in \cite{Xue2026} does not rely on the homogeneous assumption and hence \eqref{equ 3.27} holds for general $\rho$. By \eqref{equ 3.27} and Corollary 14.9 of \cite{Kallenberg1997},
\[
\left\{\frac{1}{\sqrt{N\log N}}\xi_{tN}^{N, 4}:~0\leq t\leq T\right\}_{N\geq 1}
\]
are tight under the uniform topology of $C[0, T]$. Theorem \ref{theorem 2.1 main d geq 4} in the case $d=4$ follows from Lemmas \ref{lemma 3.3}, \ref{lemma 3.4}, Equation \eqref{equ 3.21} and the above tightness.
\qed

\section{Proof of Theorem \ref{theorem 2.2 main d=3}}\label{section four}
In this section, we prove Theorem \ref{theorem 2.2 main d=3}. So throughout this section we assume that $d=3$. We still only highlight the main new difficulties generated from the inhomogeneous assumption. For any $x\in \mathbb{Z}^3$, $0\leq s\leq t, N\geq 1$ and $\eta\in \{0, 1\}^{\mathbb{Z}^d}$, we define
\[
v(t, x)=\int_0^tp_s(\mathbf{0}, x)ds
\]
and
\[
V_s^{t, N}(\eta)=\sum_{x\in \mathbb{Z}^3}\left(\eta(x)-\mathbb{E}_{\nu_{\rho, N}}\eta_s(x)\right)v(t-s, x).
\]
For $0\leq s\leq t$, we further define
\[
\mathcal{M}_s^{t, N}=V_s^{t, N}(\eta_s)-V_0^{t, N}(\eta_0)-\int_0^s(\partial_r+\mathcal{L})V_r^{t, N}(\eta_r)dr.
\]
Then, $\{\mathcal{M}_s^{t, N}\}_{s\leq t}$ is a martingale. By Kolmogorov-Chapman equation,
\[
\partial_sp_s(\mathbf{0}, x)=\sum_{y:y\sim x}\left(p_s(\mathbf{0}, y)-p_s(\mathbf{0}, x)\right)
\]
and
\[
\partial_s\mathbb{E}_{\nu_{\rho, N}}\eta_s(x)=\sum_{y:y\sim x}\left(\partial_s\mathbb{E}_{\nu_{\rho, N}}\eta_s(y)-\partial_s\mathbb{E}_{\nu_{\rho, N}}\eta_s(x)\right).
\]
Then, according an argument similar to that leading to (3.7) of \cite{Xue2026}, we have the following martingale decomposition formula,
\begin{equation}\label{equ 4.1}
\frac{1}{N^{\frac{3}{4}}}\xi_{tN}^{N, 3}=\frac{1}{N^{\frac{3}{4}}}\mathcal{M}_{tN}^{tN, N}+\frac{1}{N^{\frac{3}{4}}}V_0^{tN, N}(\eta_0).
\end{equation}
We first deal with the term $\frac{1}{N^{\frac{3}{4}}}V_0^{tN, N}(\eta_0)$ in \eqref{equ 4.1}. We have the following analogue of Lemma 3.2 of \cite{Xue2026}.
\begin{lemma}\label{lemma 4.1}
Let $\eta_0$ be distributed with $\nu_{\rho, N}$. For any $t\geq 0$,
\[
\lim_{N\rightarrow+\infty}\frac{1}{N^{\frac{3}{4}}}V_0^{tN, N}(\eta_0)=0
\]
in probability.
\end{lemma}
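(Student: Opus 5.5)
Since $\eta_0\sim\nu_{\rho,N}$ is a product measure, $V_0^{tN,N}(\eta_0)=\sum_{x\in\mathbb{Z}^3}\left(\eta_0(x)-\rho(x/\sqrt{N})\right)v(tN,x)$ is a sum of independent centered random variables. Here we use $\mathbb{E}_{\nu_{\rho,N}}\eta_0(x)=\rho(x/\sqrt{N})$. By Chebyshev's inequality, it therefore suffices to show that the second moment vanishes after scaling:
\[
\frac{1}{N^{3/2}}\mathbb{E}_{\nu_{\rho,N}}\left(\left(V_0^{tN,N}(\eta_0)\right)^2\right)=\frac{1}{N^{3/2}}\sum_{x\in\mathbb{Z}^3}\rho(x/\sqrt{N})\left(1-\rho(x/\sqrt{N})\right)v^2(tN,x)\leq \frac{1}{4N^{3/2}}\sum_{x\in\mathbb{Z}^3}v^2(tN,x).
\]
After this bound the inhomogeneity has disappeared completely. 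The estimate is exactly the one used for Lemma 3.2 of \cite{Xue2026} in the homogeneous case, where the factor $p(1-p)$ plays the role of $\frac14$, so we may cite that computation. For completeness, we also sketch it directly.

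By the Markov property and the symmetry of the walk,
\[
\sum_{x}v^2(tN,x)=\int_0^{tN}\int_0^{tN}p_{s_1+s_2}(\mathbf{0},\mathbf{0})\,ds_1ds_2 .
\]
The local central limit theorem in $d=3$ gives $p_r(\mathbf{0},\mathbf{0})\leq K(1+r)^{-3/2}$. Passing to the variable $r=s_1+s_2$, the region of integration has density at most $r$ in $r$, for $r\leq 2tN$. Hence
\[
\sum_x v^2(tN,x)\leq K\int_0^{2tN}r(1+r)^{-3/2}dr=O(\sqrt{tN}).
\]
Consequently the normalized second moment is $O(N^{1/2}/N^{3/2})=O(N^{-1})\to 0$, and the lemma follows.

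There is no real obstacle here. The only point to check is that the centering in $V_0^{tN,N}$ is exactly the mean of $\eta_0(x)$, so that the cross terms vanish by independence. This holds by the definition of $\nu_{\rho,N}$.
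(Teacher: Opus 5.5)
Your proof is correct and follows essentially the paper's route. The paper also uses that $\eta_0\sim\nu_{\rho,N}$ is a product measure with bounded single-site variance, so the inhomogeneity drops out. It then appeals to the homogeneous second-moment estimate of Lemma 3.2 of \cite{Xue2026}, which you have written out explicitly as $\sum_x v^2(tN,x)=O(\sqrt{N})$.
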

According to the fact that ${\rm Var}_{\nu_{\rho, N}}(\eta_0(x))\leq 1$, the argument leading to Lemma 3.2 of \cite{Xue2026} is applicable in the inhomogeneous case and hence we omit a detailed proof of Lemma \ref{lemma 4.1}.

Now we deal with the martingale term in \eqref{equ 4.1}. We have the following lemma.
\begin{lemma}\label{lemma 4.2}
Let $\eta_0$ be distributed with $\nu_{\rho, N}$. For any $T>0$, integer $m\geq 1$ and $0<t_1<t_2<\ldots<t_m\leq T$, 
\[
\frac{1}{N^{\frac{3}{4}}}\left(\mathcal{M}_{t_1N}^{t_1N, N}, \mathcal{M}_{t_2N}^{t_2N, N}, \ldots, \mathcal{M}_{t_mN}^{t_mN, N}\right)
\]
converges weakly to $\sqrt{12\gamma_3}\left(\zeta_{t_1}, \ldots, \zeta_{t_m}\right)$ as $N\rightarrow+\infty$.
\end{lemma}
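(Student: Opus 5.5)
The plan is to follow the scheme used in the homogeneous case, Lemma 3.3 of \cite{Xue2026}. That scheme reduces the finite-dimensional convergence of the martingale terms to two facts. First, a law of large numbers (in $L^1$ or $L^2$) for the cross quadratic covariations. Second, a uniform bound on the jumps.

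For $t_i\le t_j$, the martingales $\mathcal{M}^{t_iN,N}_\cdot$ and $\mathcal{M}^{t_jN,N}_\cdot$ are driven by the same Poisson clocks. By Dynkin's formula,
\[
\langle \mathcal{M}^{t_iN,N},\mathcal{M}^{t_jN,N}\rangle_{t_iN}=\int_0^{t_iN}\sum_{x}\sum_{y\sim x}v(t_iN-r,x)v(t_jN-r,x)\left(\eta_r(y)-\eta_r(x)\right)^2dr .
\]
The jumps of $\mathcal{M}^{tN,N}$ are bounded by $\sup_x v(tN,x)\le \int_0^\infty p_s(\mathbf{0},\mathbf{0})ds<\infty$, so after dividing by $N^{3/4}$ they vanish. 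The variance of the cross variation, normalized by $N^{3/2}$, tends to $0$. This follows as in \cite{Xue2026} from the covariance decay \eqref{equ 3.5}, which relies only on the product structure of $\nu_{\rho,N}$. It therefore suffices to identify the limit of the expectation. A martingale CLT, applied to the vector martingale obtained by stopping $\mathcal{M}^{t_jN,N}$ at the times $t_iN$, then gives Gaussian limits with the required covariance.

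For the expectation, change variables $r=sN$ and $x=\sqrt{N}u$. The local CLT gives $p_{\theta N}(\mathbf{0},\sqrt{N}u)\approx N^{-3/2}(4\pi\theta)^{-3/2}e^{-\|u\|^2/(4\theta)}$, since the variance per coordinate is $2\theta N$. Hence $v(tN-sN,\sqrt{N}u)\approx N^{-1/2}b_t(s,u)$. The normalized expected cross variation then becomes
\[
\frac{1}{N^{3/2}}\int_0^{t_iN}\sum_x\sum_{y\sim x}v\,v\,\mathbb{E}(\cdots)dr\approx \int_0^{t_i}\int_{\mathbb{R}^3}b_{t_i}(s,u)b_{t_j}(s,u)\cdot 6\cdot\mathbb{E}_{\nu_{\rho,N}}\left((\eta_{sN}(y)-\eta_{sN}(x))^2\right)du\,ds .
\]
The key input is the inhomogeneous analogue of \eqref{equ 3.8}, localized at $x$. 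For fixed $s>0$ and $K<\infty$, we need
\[
\lim_{N\to\infty}\sup_{\|x\|_1\le K\sqrt N,\,y\sim x}\left|\mathbb{E}_{\nu_{\rho,N}}\left((\eta_{sN}(y)-\eta_{sN}(x))^2\right)-2\gamma_3\varrho(s,x/\sqrt N)(1-\varrho(s,x/\sqrt N))\right|=0 .
\]
The proof repeats that of \eqref{equ 3.8} verbatim: use the duality formula \eqref{equ 3.9}, truncate the coalescence time at $K_2$ and the displacement at $K_3$, and apply the strong Markov property. The only change is that Donsker's invariance principle must now hold uniformly in the starting point. This is immediate, because $\tilde{\mathbb{E}}\rho((X^{z}_{sN-t})/\sqrt N)=\tilde{\mathbb{E}}\rho(z/\sqrt N+X^{\mathbf 0}_{sN-t}/\sqrt N)$, $\rho$ is uniformly continuous, and convergence in law of $X^{\mathbf0}_{sN}/\sqrt N$ to $\mathcal W_{2s}$ gives uniform convergence over shifts. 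The function $\varrho(s,\cdot)$ is continuous. Together with the constant $6\cdot 2\gamma_3=12\gamma_3$, this yields the claimed limit $12\gamma_3\,\mathrm{Cov}(\zeta_{t_i},\zeta_{t_j})$.

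I expect the main obstacle to be justifying the Riemann-sum and local-CLT replacement with dominated convergence. There are three regions to control. The first is the far region $\|x\|>K\sqrt N$. There the sum of $v^2$ is small uniformly in $N$ by Gaussian tail bounds on the heat kernel, since $\sum_x v(tN,x)^2=O(N^{1/2})$ with tails decaying in $K$. The second is the region $r$ close to $t_iN$, where $t-s$ is small and $v$ is concentrated near the origin. There I would bound $\mathbb{E}(\cdot)^2\le1$ and use
\[
\sum_x v(\delta N,x)^2\le \int_0^{\delta N}\!\!\int_0^{\delta N}p_{a+b}(\mathbf0,\mathbf0)\,da\,db=O(\sqrt{\delta N}),
\]
which is $o(N^{1/2})$ after $\delta\to0$. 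The third is small $s$, which is handled by the same trivial bound. Inside the remaining compact region, the uniform local CLT and the localized version of \eqref{equ 3.8} give convergence, which completes the argument.
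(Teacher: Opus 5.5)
Your argument is correct, but it is organized differently from the paper's. The paper does not apply a martingale CLT to the $\mathcal{M}^{t_iN,N}$ directly.

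\textbf{The paper's route.} It writes $N^{-3/4}\mathcal{M}^{tN,N}_{tN}=\mathcal{Y}^N(b_t^N)$ for the rescaled Poisson-flow random field $\mathcal{Y}^N$. Lemma \ref{lemma 4.3} then shows that $\mathcal{Y}^N(H)$ converges to a white noise $\mathcal{Y}(H)$ with intensity $12\gamma_3\varrho(1-\varrho)$, for compactly supported continuous test functions $H$; this is where \eqref{equ 4.4} and \eqref{equ 4.6} enter. Finally, the paper transfers the result to the singular kernels $b_t$ through the deterministic statement $b_t^N\to b_t$ in $L^2([0,T]\times\mathbb{R}^3)$, quoted from \cite{Xue2026}. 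Implicitly this uses the elementary bound $\mathbb{E}(\mathcal{Y}^N(H)^2)\le 6\|H\|_{L^2}^2$, and the joint law follows from linearity of $\mathcal{Y}^N$ and Cram\'er--Wold.

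\textbf{Your route.} You feed the actual kernels $v(t_iN-r,x)$ into a vector martingale CLT, with each component frozen after its own terminal time $t_iN$. You then carry out the local-CLT and Riemann-sum limit inside the cross-variations.

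\textbf{Comparison.} The probabilistic inputs are the same in both routes. One is the localized coalescence estimate, i.e. your version of \eqref{equ 4.6}, with Donsker made uniform over shifts exactly as you describe. The other is the covariance decay \eqref{equ 4.4}, which suffices for the $L^2$ law of large numbers of the cross-variations. The paper's route buys modularity: all kernel analysis sits in one deterministic $L^2$ convergence, and the noise limit only has to be proved for nice test functions. Yours avoids the random-field layer, at the cost of doing the dominated-convergence bookkeeping for $v\,v$ by hand.

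\textbf{A detail missing from your bookkeeping.} $b_t(s,\cdot)$ behaves like $1/(4\pi\|u\|_2)$ near $u=0$ for every $s<t$, not only as $s\uparrow t$. So the ``compact region'' where you invoke the uniform local CLT must also exclude a small ball $\|u\|_2\le\epsilon$. There the local-CLT approximation $\sqrt{N}\,v\approx b_t$ is not uniform. The ball is controlled by the Green's-function bound $v(\tau,x)\le\int_0^{\infty}p_\theta(\mathbf{0},x)d\theta\le C/(1+\|x\|_1)$. This gives an $O(\epsilon)$ contribution on both the discrete side and the continuum side, since $\|u\|_2^{-2}$ is integrable in $\mathbb{R}^3$.
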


As a preliminary of the proof of Lemma \ref{lemma 4.2}, we recall some notations and definitions introduced in \cite{Xue2026}.
For $x\sim y$, we denote by $\{N_t^{x, y}\}_{t\geq 0}$ the Poisson process with rate $1$ such that $x$ adopts the opinion of $y$ at each event moment of $N_\cdot^{x, y}$. We further define
\[
\hat{N}_t^{x, y}=N_t^{x, y}-t.
\]
According to an argument similar to that leading to (3.5) of \cite{Xue2026}, we have
\begin{equation}\label{equ 4.2}
\mathcal{M}_s^{t, N}=\sum_{x\in \mathbb{Z}^3}\sum_{y:y\sim x}\int_0^sv(t-r, x)\left(\eta_{r-}(y)-\eta_{r-}(x)\right)d\hat{N}_r^{x, y}
\end{equation}
for all $0\leq s\leq t$.

For all $N\geq 1$, let
\[
\mathbb{Z}^3/\sqrt{N}=\{x/\sqrt{N}:~x\in \mathbb{Z}^3\}.
\]
For any $u\in \mathbb{R}^3$ and $N\geq 1$, we denote by $u_N$ the element in $\mathbb{Z}^3/\sqrt{N}$ such that
\[
u-u_N\in \left(-\frac{1}{2\sqrt{N}}, \frac{1}{2\sqrt{N}}\right]^3.
\]

For each $N\geq 1$, we denote by $\mathcal{Y}^N$ the random field on $[0, T]\times \mathbb{R}^3$ such that
\begin{align*}
\mathcal{Y}^N(H)
&=N^{\frac{1}{4}}\sum_{x\in \mathbb{Z}^3}\sum_{y:y\sim x}\int_0^T\left(\int_{\frac{x}{\sqrt{N}}+(-\frac{1}{2\sqrt{N}}, \frac{1}{2\sqrt{N}}]^3} H(s,u)du\right)\left(\eta_{Ns-}(y)-\eta_{Ns-}(x)\right)d\hat{N}_{Ns}^{x,y}
\end{align*}
for any $H\in C_c\left([0, T]\times \mathbb{R}^3\right)$. We denote by $\mathcal{Y}$ the Gaussian white noise on $[0, T]\times \mathbb{R}^3$ such that
$\mathcal{Y}(H)$ follows the normal distribution with mean zero and variance $\sigma^2(H)$ given by
\[
\sigma^2(H)=12\gamma_3\int_0^T\int_{\mathbb{R}^3}H^2(s,u)\varrho(s, u)\left(1-\varrho(s, u)\right)dsdu
\]
for any  $H\in C_c\left([0, T]\times \mathbb{R}^3\right)$.

The following lemma, which is an analogue of Lemma 3.4 of \cite{Xue2026}, is crucial for us to prove Lemma \ref{lemma 4.2}.
\begin{lemma}\label{lemma 4.3}
Let $\eta_0$ be distributed with $\nu_{\rho, N}$. For any $H\in C_c([0, T]\times \mathbb{R}^3)$, $\mathcal{Y}^N(H)$ converges weakly to $\mathcal{Y}(H)$ as $N\rightarrow+\infty$. 
\end{lemma}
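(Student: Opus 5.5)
We will prove Lemma \ref{lemma 4.3} with a martingale central limit theorem, following the homogeneous argument of Lemma 3.4 of \cite{Xue2026}. The only new input is the inhomogeneous local estimate of $\mathbb{E}_{\nu_{\rho, N}}\left((\eta_{sN}(y)-\eta_{sN}(x))^2\right)$, now needed uniformly for $x$ in the range $\|x\|_1=O(\sqrt{N})$ rather than near the origin.

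Write $H_N(s,x)=\int_{x/\sqrt{N}+(-\frac{1}{2\sqrt{N}},\frac{1}{2\sqrt{N}}]^3}H(s,u)\,du$, which is of order $N^{-3/2}H(s,x/\sqrt{N})$. Define
\[
\mathcal{Y}^N_\theta(H)=N^{\frac{1}{4}}\sum_{x}\sum_{y\sim x}\int_0^\theta H_N(s,x)\left(\eta_{Ns-}(y)-\eta_{Ns-}(x)\right)d\hat{N}^{x,y}_{Ns}, \qquad 0\leq\theta\leq T.
\]
This is a martingale in $\theta$, and its jumps are bounded by $N^{1/4}\|H\|_\infty N^{-3/2}\to 0$. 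By the martingale functional CLT (e.g.\ Theorem VIII.3.11 of Jacod--Shiryaev), it suffices to show that the predictable quadratic variation at time $T$ converges in probability to $\sigma^2(H)$. That quadratic variation is
\[
\langle \mathcal{Y}^N(H)\rangle_T=N^{\frac{1}{2}}\cdot N\int_0^T\sum_x\sum_{y\sim x}H_N^2(s,x)\left(\eta_{Ns}(y)-\eta_{Ns}(x)\right)^2ds
\approx \int_0^T\frac{1}{N^{3/2}}\sum_x\sum_{y\sim x}H^2(s,x/\sqrt{N})\left(\eta_{Ns}(y)-\eta_{Ns}(x)\right)^2ds.
\]
We handle it in two steps: convergence of the expectation, then vanishing of the variance.

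\textbf{Expectation.} The key claim is that for each $s>0$ and $R>0$,
\[
\lim_{N\to\infty}\sup_{\|x\|_1\leq R\sqrt{N},\,y\sim x}\left|\mathbb{E}_{\nu_{\rho,N}}\left((\eta_{sN}(y)-\eta_{sN}(x))^2\right)-2\gamma_3\varrho(s,x/\sqrt{N})\left(1-\varrho(s,x/\sqrt{N})\right)\right|=0.
\]
We prove this by repeating the chain \eqref{equ 3.9}--\eqref{equ 3.19}, with the target $\varrho(s,\mathbf{0})$ replaced by $\varrho(s,x/\sqrt N)$. The steps are the same: use independence on the full event, truncate $\tau_{x,y}\le K_2$, truncate the displacement $\|X^x_{\tau}-x\|_1\le K_3$, and apply the strong Markov property. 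Each use of Donsker there becomes the statement that $\tilde{\mathbb{E}}\rho(X^z_{sN-t}/\sqrt N)-\mathbb{E}\rho(\mathcal{W}^3_{2s}+x/\sqrt N)\to 0$ uniformly over $\|z-x\|_1\le K_3+1$, $t\le K_2$ and $\|x\|_1\le R\sqrt N$. This uniformity holds because the difference is bounded by
\[
\left|\mathbb{E}f(X^{\mathbf 0}_{sN-t}/\sqrt N)-\mathbb{E}f(\mathcal{W}_{2s})\right|
\]
for the shifted function $f=\rho(\cdot+x/\sqrt N)$, plus a uniform-continuity error of order $K_3/\sqrt N$. Since $\rho$ is uniformly continuous and bounded, the family of shifts is uniformly equicontinuous and bounded, and weak convergence is uniform over such a class. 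We can make this concrete with a coupling of $X^{\mathbf 0}_{sN}/\sqrt N$ and $\mathcal{W}_{2s}$ that converges in probability (Skorokhod representation). The point $s$ near $0$ is harmless because $H$ is bounded and the integrand is bounded by $1$. Given the claim, a Riemann-sum argument together with the compact support of $H$ and continuity of $\varrho$ gives
\[
\mathbb{E}\langle\mathcal{Y}^N(H)\rangle_T\to 6\cdot 2\gamma_3\int_0^T\int H^2\varrho(1-\varrho)=\sigma^2(H),
\]
since each site has six neighbors.

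\textbf{Variance.} We must show that $\mathrm{Var}\langle\mathcal{Y}^N(H)\rangle_T\to 0$. Expanding, this needs covariances
\[
\mathrm{Cov}\left((\eta_{Ns}(y)-\eta_{Ns}(x))^2,\,(\eta_{Ns'}(w)-\eta_{Ns'}(z))^2\right)
\]
to be small on average over pairs of sites at distance of order $\sqrt N$. In \cite{Xue2026} this is done with a duality bound (the analogue of \eqref{equ 3.5}): the covariance is controlled by the probability that the two pairs of coalescing walks interact. That bound uses only the product structure of the initial law, since independence of the initial values at distinct sites makes the covariance vanish on the event that the dual walks never meet. We therefore import it verbatim.

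The main obstacle is the uniform inhomogeneous estimate in the first step. The risk is in making the Donsker replacement uniform over starting points at macroscopic distance $\sqrt N$. The equicontinuity argument above is what we would rely on there.
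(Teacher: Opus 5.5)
Your proposal is correct and follows essentially the paper's route. You reduce via the martingale CLT to convergence of the expected quadratic variation, which comes from the uniform inhomogeneous estimate \eqref{equ 4.6} built on the uniform Donsker statement \eqref{equ 4.5}; your equicontinuity/coupling justification of that uniformity is a welcome addition. You then kill the variance with the covariance bound imported from \cite{Xue2026}.

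One small correction to your description: that imported bound, \eqref{equ 4.4}, is decorrelation in the time lag $r\to+\infty$, uniformly over the sites, not spatial averaging over pairs at distance of order $\sqrt{N}$. Either mechanism is enough to give $\mathrm{Var}\,\langle\mathcal{Y}^N(H)\rangle_T\to 0$.
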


\proof[Proof of Lemma \ref{lemma 4.3}]
According to an argument similar to that leading to Lemma 3.4 of \cite{Xue2026}, to complete this proof we only need to show that
\begin{align}\label{equ 4.3}
&\lim_{N\rightarrow+\infty}N^{\frac{1}{2}}\sum_{x\in \mathbb{Z}^3}\sum_{y:y\sim x}\int_0^t\left(\int_{\frac{x}{\sqrt{N}}+(-\frac{1}{2\sqrt{N}}, \frac{1}{2\sqrt{N}}]^3}h(u)du\right)^2\mathbb{E}_{\nu_{\rho, N}}\left(\left(\eta_{Ns}(y)-\eta_{Ns}(x)\right)^2\right) Nds \notag\\
&=12\gamma_3\int_0^t\left(\int_{\mathbb{R}^3}h^2(u)\varrho(s, u)\left(1-\varrho(s, u)\right)du\right)ds
\end{align}
for any $h\in C_c(\mathbb{R}^3), t\leq T$ and 
\begin{equation}\label{equ 4.4}
\lim_{r\rightarrow+\infty}\sup_{x\sim y, z\sim w, s\geq 0, N\geq 1}{\rm Cov}_{\nu_{\rho, N}}\left(\left(\eta_{s}(y)-\eta_{s}(x)\right)^2, \left(\eta_{s+r}(z)-\eta_{s+r}(w)\right)^2\right)=0.
\end{equation}
The proof of \eqref{equ 4.4} in the homogeneous case given in \cite{Xue2026} does not rely on the homogeneous assumption and hence Equation \eqref{equ 4.4} holds for general $\rho$. Now we only need to check \eqref{equ 4.3}. According to the definition of $\varrho$ and the Donsker's invariance principle of the simple random walk, we have
\begin{equation}\label{equ 4.5}
\lim_{N\rightarrow+\infty}\sup_{x: \|x\|_1\leq K\sqrt{N}}\left|\tilde{\mathbb{E}}\rho\left(X_{sN}^x/\sqrt{N}\right)-\varrho(s, x/\sqrt{N})\right|=0
\end{equation}
for any $K>0$ and $s>0$. By \eqref{equ 4.5} and an argument similar to that leading to \eqref{equ 3.8}, we have the following analogue of \eqref{equ 3.8},
\begin{align}\label{equ 4.6}
&\lim_{N\rightarrow+\infty}\sup_{x, y: \|x\|_1\leq K\sqrt{N}, y\sim x}\left|\mathbb{E}_{\nu_{\rho, N}}\left(\left(\eta_{sN}(y)-\eta_{sN}(x)\right)^2\right)
-2\gamma_3\varrho(s, x/\sqrt{N})\left(1-\varrho(s, x/\sqrt{N}\right)\right|\notag\\
&=0
\end{align}
for any $K>0$ and $s>0$. Since each $x\in \mathbb{Z}^3$ has $6$ neighbors, Equation \eqref{equ 4.3} follows from \eqref{equ 4.6} and then the proof is complete.
\qed

Now we prove Lemma \ref{lemma 4.2}.
\proof[Proof of Lemma \ref{lemma 4.2}]
By \eqref{equ 4.2}, for any $t\leq T$, we have
\[
\frac{1}{N^{\frac{3}{4}}}\mathcal{M}_{tN}^{tN,N}=\mathcal{Y}^N(b_t^N),
\]
where
\[
b_t^N(s, u)=\sqrt{N}v\left(N(t-s), \sqrt{N}u_N\right)1_{\{s\leq t\}}
\]
for any $u\in \mathbb{R}^3$ and $0\leq s\leq T$. Let $b_t$ be defined as in Section \ref{section two}. It is shown in \cite{Xue2026} that $b_t^N$ converges to $b_t$ in $L^2([0, T]\times \mathbb{R}^3)$ as $N\rightarrow+\infty$. Therefore, by Lemma \ref{lemma 4.2}, 
\[
\frac{1}{N^{\frac{3}{4}}}\left(\mathcal{M}_{t_1N}^{t_1N, N}, \mathcal{M}_{t_2N}^{t_2N, N}, \ldots, \mathcal{M}_{t_mN}^{t_mN, N}\right)
\]
converges weakly to $\left(\mathcal{Y}(b_{t_1}), \ldots, \mathcal{Y}(b_{t_m})\right)$ as $N\rightarrow+\infty$.  According to the definitions of $\mathcal{Y}$ and $b_t$, 
$\left(\mathcal{Y}(b_{t_1}), \ldots, \mathcal{Y}(b_{t_m})\right)$ is a Gaussian random vector such that
\[
{\rm Cov}\left(\mathcal{Y}(b_{t_i}), \mathcal{Y}(b_{t_j})\right)=
12\gamma_3\int_0^{t_i}\int_{\mathbb{R}^3}b_{t_i}(s,u)b_{t_j}(s, u)\varrho(s, u)\left(1-\varrho(s, u)\right)dsdu
\]
for any $t_i<t_j$. Therefore, $\left(\mathcal{Y}(b_{t_1}), \ldots, \mathcal{Y}(b_{t_m})\right)$ and $\sqrt{12\gamma_3}\left(\zeta_{t_1}, \ldots, \zeta_{t_m}\right)$ have the same probability distribution and the proof is complete. 
\qed

At last, we prove Theorem \ref{theorem 2.2 main d=3}.
\proof[Proof of Theorem \ref{theorem 2.2 main d=3}]
It is shown in Section 3 of \cite{Xue2026} that, when $\rho\equiv p\in (0, 1)$, there exists $K_7<+\infty$ independent of $s, t, N$ such that
\begin{equation}\label{equ 4.7}
\mathbb{E}_{\nu_{\rho, N}}\left(\left(\frac{1}{N^{\frac{3}{4}}}\xi_{tN}^{N, 3}-\frac{1}{N^{\frac{3}{4}}}\xi_{sN}^{N, 3}\right)^2\right)\leq K_6(t-s)^{\frac{3}{2}}
\end{equation} 
for any $N\geq 1$ and $0\leq s\leq t$. The proof of \eqref{equ 4.7} given in \cite{Xue2026} does not rely on the homogeneous assumption and hence \eqref{equ 4.7} holds for general $\rho$. By \eqref{equ 4.7} and Corollary 14.9 of \cite{Kallenberg1997}, 
\[
\left\{\frac{1}{N^{\frac{3}{4}}}\xi_{tN}^{N, 3}:~0\leq t\leq T\right\}_{N\geq 1}
\]
are tight under the uniform topology. Theorem \ref{theorem 2.2 main d=3} follows from Equation \eqref{equ 4.1}, Lemmas \ref{lemma 4.1}, \ref{lemma 4.2} and the above tightness. 
\qed

\quad

\textbf{Acknowledgments.}
The author is grateful to financial
supports from the National Natural Science Foundation of China with grant numbers 12371142 and 12271026.

{}
\end{document}